\newcommand{\raisemath}[1]{\mathpalette{\raisem@th{#1}}}
\newcommand{\raisem@th}[3]{\raisebox{#1}{$#2#3$}}
\newtheorem{theorem}{Theorem}[section]
\newtheorem{utv*}{Proposition}
\newtheorem{hyp*}{Conjecture}
\newtheorem*{example*}{Example}
\newtheorem{lemma}[theorem]{Lemma}
\newtheorem{corollary}[theorem]{Corollary}
\newtheorem*{th*}{Theorem}
\theoremstyle{definition}
\newtheorem{defin}[theorem]{Definition}
\newtheorem{example}[theorem]{Example}
\def\sli{\sum\limits}
\def\ili{\int\limits}
\def\R{\mathbb{R}}
\def\ep{\varepsilon}
\def\vf{\varphi}
\def\B{\mathbb{B}}
\def\clos{\textup{clos}}
\def\NN{\mathbb N}
\def\R{\mathbb R}
\def\EE{\mathcal{E}}
\renewcommand{\L}{\mathcal{L}}
\newcommand{\dist}{\operatorname{dist}}
\newcommand{\PP}{\mathcal{P}}
\DeclareTextFontCommand{\textcyr}{\cyr}
\newcounter{vremennyj}
\def\o{\overline}
\def\u{\underline}
\def\H{\mathcal{H}}
\renewcommand\S{\mathbb{S}}
\title[Optimal discrete measures]{Optimal discrete measures for Riesz potentials}
\date{\today}
\begin{document}
\author{S. V. Borodachov}
\address{Department of Mathematics, Towson University}
\email{sborodachov@towson.edu}
\author{D. P. Hardin$^\dagger$}
\address{Center for Constructive Approximation, Department of Mathematics, Vanderbilt University}
\email{doug.hardin@vanderbilt.edu}
\author{A. Reznikov$^\dagger$}
\address{Center for Constructive Approximation, Department of Mathematics, Vanderbilt University}
\email{aleksandr.b.reznikov@vanderbilt.edu}
\author{E. B. Saff$^\dagger$}
\address{Center for Constructive Approximation, Department of Mathematics, Vanderbilt University}
\email{edward.b.saff@vanderbilt.edu}

\thanks{$^\dagger$ This research was supported, in part,
by the U. S. National Science Foundation under the grant DMS-1412428 and DMS-1516400.
}

\maketitle

\date{}

\begin{abstract}
For weighted Riesz potentials of the form $K(x,y)=w(x,y)/|x-y|^s$, we investigate $N$-point configurations $x_1,x_2, \ldots, x_N$ on a $d$-dimensional
compact subset $A$ of $\mathbb{R}^p$ for which the minimum of $\sum_{j=1}^NK(x,x_j)$ on $A$ is maximal.   Such quantities are called
$N$-point Riesz $s$-polarization (or Chebyshev) constants. 
 For $s\geqslant d$, we obtain the
dominant term  as $N\to \infty$  of such constants for a class of $d$-rectifiable  subsets of
$\mathbb{R}^p$.  This class includes
 compact subsets of $d$-dimensional $C^1$ manifolds whose boundary relative to the manifold has $d$-dimensional Hausdorff measure zero, as well as finite unions of such sets when their pairwise intersections have measure zero.  We also explicitly determine the weak-star limit distribution of asymptotically optimal $N$-point
 configurations for weighted $s$-polarization as $N\to \infty$.
\end{abstract}

\vspace{4mm}

\footnotesize\noindent\textbf{Keywords: maximal Riesz polarization, Chebyshev constant, rectifiable set, Hausdorff measure, Riesz potential}

\vspace{2mm}

\noindent\textbf{Mathematics Subject Classification:} Primary: 31C20, 31C45 ; Secondary: 28A78.

\vspace{2mm}

\normalsize

\section{Introduction}\label{S1}
For a compact set $A\subset \R^p,$ two classical geometric problems are that of best-packing and best-covering by
an $N$-\emph{point multi-set} (or $N$-\emph{point configuration})  $\omega_N=\{x_1,  \ldots, x_N \}\subset A$; i.e., a set of
points with possible repetitions and cardinality  $\#\omega_N=N.$  The former problem is to determine the largest possible
separation distance that can be attained by $N$ points of $A$:
$$\delta_N(A):=\max_{\omega_N \subset A}\min_{i \neq j}|x_i-x_j|,
$$
while the latter is to find the smallest radius so that the union of $N$ closed balls of this radius centered at points of $A$ covers $A$:
$$\rho_N(A):=\min_{\omega_N\subset A}\max_{y \in A}\min_{x \in \omega_N}|x -y |.
$$
These two problems are referred to by some authors as being `somewhat dual' (cf. \cite{MR1662447}). They are, in fact, limiting cases of certain minimal energy and maximal Chebyshev (polarization) problems for strongly repulsive kernels as we now describe.

Given a lower semi-continuous kernel $K(x,y):A\times A \to (-\infty, \infty]$ and an $N$-point configuration $\omega_N$ as above, its
$K$-energy is


$$E_K(\omega_N):=\sli_{1\leqslant i\not=j \leqslant N}K(x_i, x_j),
$$
and we denote by $\EE_K(A; N)$ the minimal $K$-energy over all such $N$-point configurations:
$$\EE_K(A; N):=\min_{\omega_N \subset A} \{E_K(\omega_N)\}.
$$
Determining $N$-point configurations $\omega_N^*$ such that $E_K(\omega_N^*)=\EE_K(A;N)$; i.e., finding $N$-\emph{point equilibrium configurations}, is in general a difficult problem having classical roots (e.g. the Thomson problem \cite{thomson1904xxiv} for electrons on the sphere). For strongly repulsive kernels
 $K$, minimal discrete energy problems resemble best-packing ones.\

 The less studied notion of maximal polarization (or  maximal Chebyshev constant) is the following. Let
$$
U_K(y; \omega_N):=\sli_{i=1}^N K(y, x_i)
$$
and consider its minimum:
$$
P_K(A; \omega_N):=\min_{y\in A}U_K(y; \omega_N).
$$
Then the {\it $N$-th $K$-polarization (or Chebyshev) constant of $A$} is defined by
\begin{equation}\label{optpol}
\PP_K(A; N):=\max_{\omega_N \subset A} P_K(A; \omega_N),
\end{equation}
and we say that $\omega_N^*$ is \emph{an optimal} (or \emph{maximal}) $K$-\emph{polarization configuration} whenever $P_K(A; \omega_N^*)=\PP_K(A; N).$
For example, if $A$ is the interval $[-1,1]$ and $K$ is  the logarithmic kernel, $K_{\log}(x,y):=-\log|x-y|$, then the optimal $N$-point log-polarization configuration consists of the zeros of the Chebyshev polynomial $\cos(N \arccos x)$. Furthermore, for an arbitrary compact subset $A$ of the plane, the limiting behavior (as $N\to\infty)$  of $\PP_{\log}(A; N)$ determines the logarithmic capacity of   $A$ (see, e.g. \cite{MR1485778}).\

We  remark that from an applications prospective, the maximal polarization problem, say on a compact surface (or volume), can be viewed as the problem of determining the smallest number of sources (injectors) of a substance together with their optimal locations that can provide a required dosage of the substance to every point of the surface (volume). Such problems arise, for example, in the implantation of radioactive seeds for the treatment of a tumor.\

 The precise connections of the minimal energy and maximal polarization problems to best-packing and best-covering are as follows. Let
$$K_s(x,y):=\frac{1}{|x-y|^s},\,\,s>0, $$
denote the Riesz $s$-kernel. Then for   $N$ fixed,
$$\lim_{s\to \infty}\left[\EE_{K_s}(A; N)\right]^{1/s}=\frac{1}{\delta_N(A)},\quad N\ge 2,
$$
and
$$
\lim_{s\to \infty} \left[\PP_{K_s}(A; N)\right]^{1/s}=\frac{1}{\rho_N(A)}, \quad N\ge 1.
$$
 Moreover, every limit configuration (as $s \to \infty$) of optimal $N$-point configurations for the discrete $s$-energy and $s$-polarization problems is an $N$-point
 best-packing, respectively, best-covering configuration for $A$ (see \cite{Borodachov2007},\cite{Borodachov2015}).\

 While Riesz equilibrium configurations have been much studied (see e.g. \cite{Dahlberg1978}, \cite{MR1485778}, \cite{Landkof1972}, \cite{Kuijlaars1998}, \cite {Hardin2005},\cite{Borodachov2015}), polarization problems are somewhat more
 difficult to tackle. For example, if $A$ is the unit circle $\S^1$ and $s>0$, then it is fairly straightforward (using a convexity
 argument) to show that minimal $N$-point Riesz $s$-equilibrium configurations are given by $N$ equally spaced points. However, the analogous
 problem for $N$-point maximal polarization configurations (which everyone would guess has the same solution) was a
 conjecture of Ambrus, Ball, and Erd\'{e}lyi \cite{Ambrus2013} for which only partial results \cite{Ambrus2009}, \cite{Ambrus2013}, \cite{Erdelyi2013} existed until a rather subtle general proof
 was presented in \cite{Hardin2013}. Similarly, when $A=\S^2$ (the unit sphere in $\R^3$), $s>0$, and $N=4$, the vertices of the inscribed tetrahedron are
 optimal both for minimal energy and maximal polarization, but the proof of the latter is far more difficult than that of the former (see \cite{Su2014}).\

    The goal of the present paper is to study the asymptotic behavior (as $N\to \infty$) of maximal $N$-point Riesz $s$-polarization
configurations on manifolds embedded in $\R^p$ for the so-called `hypersingular (or nonintegrable) case' when $s> \dim(A)$, where $\dim(A)$ denotes the Hausdorff dimension of $A$.  Our results can be considered
as dual to those on minimal energy that appeared in this journal \cite{Borodachov2008}. While some arguments
developed for those minimal energy problems can be adapted to our purpose, the investigation of polarization configurations requires
some novel techniques, as foreshadowed by the examples mentioned above. For instance, while  minimal energy has a simple monotonicity
property: $A \subset B\, \Rightarrow \,\EE_K(B; N) \leq \EE_K(A; N)$,
no such analogous property holds for polarization.

The notion of polarization for potentials was likely first introduced by Ohtsuka (see, e.g., \cite{Ohtsuka1967}) who explored (for very general kernels) their relationship to various definitions of capacity that arise in electrostatics. In particular, he showed that for any compact set $A\subset \R^p$ the following limit, called {\it the Chebyshev constant} of $A$, always exists as an extended real number:
\begin {equation}\label {s<d}
T_K(A):=\lim\limits_{N\to\infty}{\frac {\mathcal P_K(A;N)}{N}},
\end {equation}
and, moreover, is given by the continuous analogue of polarization:
\begin {equation}\label {mu^s}
T_K(A)=\sup\limits_{\mu\in \mathfrak M(A)}\inf\limits_{y\in A}U_K^{\mu}(y),
\end {equation}
where $\mathfrak M(A)$ is the set of all Borel probability measures supported on $A$, and
$$
U_K^{\mu}(y):=\int_A K(x,y) \textup{d}\mu(x).
$$
Ohtsuka further showed that $T_K(A)$ is not smaller than the Wiener constant
$$
W_K(A):=\inf\limits_{\mu \in {\mathfrak M}(A)} \ili_A U_K^\mu(y)\textup{d}\mu(y).
$$
In the case when $K$ is a positive, symmetric kernel satisfying a maximum principle, Farkas and Nagy \cite{Farkas2008} proved that $W_K(A)=T_K(A)$.  

While the assertions \eqref{s<d} and \eqref{mu^s} clearly indicate a connection between the discrete and continuous polarization problems,
what is yet to be fully understood is the limiting behavior (as $N \to \infty$) of the optimal $N$-point $K$-polarization configurations.
For continuous kernels, it is easy to establish (see \cite{Farkas2008},\cite{Farkas2006}, \cite{Farkas2006a}) that every weak-star limit of the normalized counting measures
associated with these $N$-point configurations must be an optimal (maximal) measure for the continuous polarization problem. However,
for other integrable kernels such as Riesz $s$-kernels when $s<\dim(A)$, only partial results are known (see  \cite{Simanek2015} and \cite{reznikov2016minimum}). For nonintegrable kernels, although the continuous problem is vacuous ($T_K(A)=\infty$), the asymptotic behavior of
optimal $N$-point discrete polarization configurations is a valid concern, especially in light of its connection
to the best-covering problem for large values of $s$ as mentioned above.\

Hereafter, our focus is on Riesz potentials so, for the sake of brevity, we write $\PP_s(A;N)$ in place of $\PP_{K_s}(A;N)$, and similarly for $P_s(A;\omega_N)$ and $\EE_s(A;N).$
The order of growth of the quantity $\mathcal P_s(A;N)$ in the case $s\geqslant \dim(A)$ was established by Erd{\'e}lyi and Saff \cite [Theorems 2.3 and 2.4]{Erdelyi2013}. If the $d$-dimensional Hausdorff measure of $A$ is positive, then
\begin {equation}\label {upper}
\mathcal P_s(A;N)=O(N^{s/d}),\ \ s>d, \ \ {\rm and}\ \ \mathcal P_d(A;N)=O(N\log N), \ \ N\to \infty.
\end {equation}
When $s=d$ and $A$ is a compact subset of a $d$-dimensional $C^1$-manifold, the following precise limit was established by Borodachov and Bosuwan \cite {Borodachov2014}:
\begin {equation} \label {s=d}
\lim\limits_{N\to \infty}\frac {\mathcal P_d(A;N)}{N\log N}=\frac {\textup{Vol}(\mathbb{B}^d)}{\mathcal H_d(A)},
\end {equation}
where $\mathbb{B}^d:=\{x\in \R^d\colon |x|\leqslant 1\}$ and by $\H_d$ we denote the $d$-dimensional Hausdorff measure on $\R^p$, scaled so that $\H_d({\mathcal Q})=1$, where $\mathcal{Q}$ is a $d$-dimensional unit cube embedded in $\R^p$. The cases $A= \mathbb{B}^d$ and $A=\S^d$ of \eqref {s=d} were earlier established in \cite {Erdelyi2013}.

Here we establish precise asymptotics for the case $s>d:=\dim(A)$. Specifically, as a consequence   of our main theorem, Theorem \ref{mainmain}, we show that for $s>d$, there exists a positive finite constant $\sigma_{s,d}$ such that  for a general class of $d$-dimensional  sets $A$ with $\H_d(A)>0$  we have the following limit:
$$
\lim_{N\to \infty}\frac{\mathcal{P}_s(A; N)}{N^{s/d}}=\frac{\sigma_{s,d}}{\H_d(A)^{s/d}}.
$$
Furthermore,    $N$-point $s$-polarization optimal  configurations are asymptotically uniformly distributed on $A$ with respect to $d$-dimensional Hausdorff measure.    We also consider in Theorem~\ref{mainmain} the   more general class of weighted Riesz potentials.

The paper is structured as follows. In Section \ref {S2} we present and discuss two important special cases, Theorem~\ref {mainmainmainmain}  and
Theorem~\ref{mainmainmainweight}, of our main result Theorem~\ref {mainmain}.  We illustrate these special cases with   the  examples of a smooth curve, a sphere, and a ball. Section~\ref {S2.5} contains  relevant definitions and the statement   of our main result. Section \ref {S3} compares our results with their known analogues for the minimal discrete Riesz energy, while the remaining sections are devoted to the proofs of our results.

\section{Some special cases of main result}\label{S2}

We begin with the following definition and some needed notation.
\begin{defin}\label{asympconf}
Assume $A\subset \R^p$ and $s>0$. For every positive integer $N$, let $\omega_N$ denote an $N$-point   configuration on $A$. We call a sequence $\{\omega_N\}_{N\geqslant 1}$   {\it asymptotically $s$-optimal} if
$$
\lim_{N\to \infty}\frac{P_s(A; \omega_N)}{\mathcal P_s(A;N)}=1.
$$
\end{defin}
Furthermore, by $\L_p$ we denote the Lebesgue measure on $\R^p$. If $x\in \R^p$ and $r>0$, by $B(x,r)$ we denote the open ball $\{y\in \R^p\colon |y-x|<r\}$ and by $B[x,r]$ the closed ball $\{y\in \R^p : \left|y-x\right|\leqslant r\}$.

Our first result concerns the asymptotic behavior of $\PP_s(A;N)$ as well as the associated optimal configurations.  In the statement we shall use the notion of {\em weak-star} convergence of discrete measures. For an $N$-point configuration $\omega_N$ on $A$ we associate the normalized counting measure 
\begin{equation}\nu(\omega_N):=\frac{1}{N}\sum_{x\in \omega_N}\delta_x,
\end{equation}
where $\delta_x$ denotes the unit point mass at $x$.  Recall that  $\nu(\omega_N)$ converges weak-star to a Borel probability measure $\mu$ on $A$ (and we write $\nu(\omega_N) \stackrel{*}{\longrightarrow} \mu$) if 
\begin{equation}
\lim_{N\to \infty}\int f \ {\rm d}\nu(\omega_N)= \lim_{N\to \infty} \frac{1}{N}\sum_{x\in \omega_N}f(x)=\int f \ {\rm d}\mu,
\end{equation}
for any $f\in C(A)$ or, equivalently (cf. \cite[Theorem 1.9.3]{Borodachov2015}), if
\begin{equation}
\nu(\omega_N)(B)=\#(\omega_N\cap B)/N\to \mu(B)\quad \text{ as } \quad N\to \infty,
\end{equation}
for any Borel measurable set $B\subset A$ with the $\mu(\partial B)=0$.

\begin{theorem}\label{mainmainmainmain}
Let $\mathcal{Q}_p$ denote the unit cube $[0,1]^p$ in $\R^p$. Then, for every $s>p$,  the limit
\begin{equation}\label{cube222}
\sigma_{s,p}:=\lim_{N\to \infty} \frac{\PP_s(\mathcal{Q}_p; N)}{N^{s/p}}
\end{equation}
exists and is positive and finite. More generally, if $s> d$ and $A$ is a compact subset of a   $d$-dimensional $C^1$ manifold in $\R^p$ with the relative boundary of $A$ having $\H_d$ measure  zero, then
$$
\lim_{N\to \infty} \frac{\PP_s(A; N)}{N^{s/d}} = \frac{\sigma_{s,d}}{\H_d(A)^{s/d}}.
$$
Furthermore, if $\H_d(A)>0$, then for any asymptotically $s$-optimal sequence $ \{\omega_N\}_{N\geqslant 1}$,
\begin{equation}\label{maindistr44444}
 \nu(\omega_N) \stackrel{*}{\longrightarrow}  \frac {1}{\H_d(A)}\H_d{\raisemath{-2pt}{\big |}}_A  \quad  \text{ as } \quad N\to \infty.
\end{equation}
\end{theorem}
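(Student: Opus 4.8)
The plan is to reduce the theorem to three ingredients and treat them in turn: (I) the case of a cube, establishing that $\sigma_{s,p}$ (and, applying the same argument with $p$ replaced by $d$, the constant $\sigma_{s,d}$) exists, is finite and positive, and that $\PP_s(\mathcal Q_d;N)=\sigma_{s,d}N^{s/d}(1+o(1))$ for a $d$-dimensional cube $\mathcal Q_d\subset\R^p$; (II) transferring this asymptotic to bi-Lipschitz images of cubes with Lipschitz constant close to $1$; (III) passing to a general $A$ by a fine cover, and deducing the weak-$*$ statement. For (I), finiteness of $\limsup_N\PP_s(\mathcal Q_p;N)/N^{s/p}$ comes from averaging $U_{K_s}(\cdot;\omega_N)$ over $\mathcal Q_p$ with small balls deleted around the points of $\omega_N$ — deleting balls of radius $\asymp N^{-1/p}$ keeps positive Lebesgue mass and makes the relevant radial integral $\int\rho^{p-1-s}\,d\rho$ converge, which is exactly where $s>p$ enters — while positivity of $\liminf_N$ comes from testing with a $\lfloor N^{1/p}\rfloor^p$-point grid. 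The existence of the limit I would get by a Fekete-type argument: subdividing $\mathcal Q_p$ into $m^p$ congruent subcubes and placing an optimal $\lfloor N/m^p\rfloor$-point configuration in each gives $\PP_s(\mathcal Q_p;N)\ge m^s\PP_s(\mathcal Q_p;\lfloor N/m^p\rfloor)$, and this, together with the monotonicity of $N\mapsto\PP_s(\mathcal Q_p;N)$ and the two-sided bound by $N^{s/p}$, forces convergence (for large $N$ compare against a near-$\limsup$ sequence $N_j$ by choosing $m=\lfloor(N/N_j)^{1/p}\rfloor$ and using monotonicity). The same subdivision, combined with the scaling $\PP_s(tC;N)=t^{-s}\PP_s(C;N)$ and the fact that polarization depends only on intrinsic distances, gives the statement for $\mathcal Q_d\subset\R^p$.

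For (II): if $\phi$ is $(1+\e)$-bi-Lipschitz on a $d$-cube $C$, pushing forward configurations and comparing $|\phi(x)-\phi(y)|^{-s}$ with $|x-y|^{-s}$ yields $(1+\e)^{-s}\PP_s(C;N)\le\PP_s(\phi(C);N)\le(1+\e)^{s}\PP_s(C;N)$, and since $\H_d(\phi(C))=\H_d(C)(1+O(\e))$ this gives $\PP_s(\phi(C);N)=\tfrac{\sigma_{s,d}}{\H_d(\phi(C))^{s/d}}N^{s/d}(1+o_N(1)+O(\e))$. For (III), the lower bound: since $A$ is a compact subset of a $d$-dimensional $C^1$ manifold with $\H_d(\partial_{\mathrm{rel}}A)=0$, for each $\e>0$ one may cover $\H_d$-almost all of $A$ by finitely many essentially disjoint sets $A_1,\dots,A_m$, each a $(1+\e)$-bi-Lipschitz image of a small $d$-cube, with $\sum_j\H_d(A_j)=\H_d(A)(1+o_\e(1))$. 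Distributing $N$ points with $N_j\approx N\H_d(A_j)/\H_d(A)$ in an optimal configuration on each $A_j$, every $y\in A$ lies in some $A_j$ and sees potential at least $\PP_s(A_j;N_j)=(1+o(1))\tfrac{\sigma_{s,d}}{\H_d(A)^{s/d}}N^{s/d}$, so $\PP_s(A;N)\ge(1+o(1))\tfrac{\sigma_{s,d}}{\H_d(A)^{s/d}}N^{s/d}$, and then $\e\to0$. For the upper bound, take a near-optimal $\omega_N$; by pigeonhole some piece $A_{j_0}$ carries at most $(1+o(1))N\H_d(A_{j_0})/\H_d(A)$ points, and I need $y\in A$ with $U_{K_s}(y;\omega_N)\le(1+o(1))\tfrac{\sigma_{s,d}}{\H_d(A)^{s/d}}N^{s/d}$. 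The contribution to $U_{K_s}(y;\omega_N)$ of all points at distance $\ge\rho$ from $y$ is at most $N\rho^{-s}$, which is $o(N^{s/d})$ once $\rho\gg N^{-(1/d-1/s)}$ — the decisive use of $s>d$ — so it suffices to find $y$ inside $A_{j_0}$, at distance $\ge\rho$ from $A\setminus A_{j_0}$, at which the potential of $\omega_N\cap A_{j_0}$ is $\le\PP_s(A_{j_0};\#(\omega_N\cap A_{j_0}))(1+o(1))$.

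Producing such a $y$ is the main obstacle. A naive minimizer of the potential of $\omega_N\cap A_{j_0}$ over $A_{j_0}$ may lie near $\partial A_{j_0}$, where points of $\omega_N\cap A_{j_0}$ close to $\partial A_{j_0}$, together with points of $\omega_N$ in neighboring pieces, pollute the potential at interior points. I would control this by a secondary localization: average over a family of concentric boundary bands of $A_{j_0}$ of width $\rho$, at depths $\lesssim\tau$ with $\rho\ll\tau\ll\operatorname{diam}A_{j_0}$, to select one band carrying only $O(N\rho/\tau)=o(N)$ points of $\omega_N$, and then work inside the sub-piece of $A_{j_0}$ cut off by that band. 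On that sub-piece the configuration and the evaluation region coincide, so the bound $\PP_s(\cdot;\#)$ applies cleanly; the selected band is under-populated and separated from the evaluation region; and everything else lies at distance $\ge\rho$ and therefore contributes $o(N^{s/d})$ by the $s>d$ gain. Choosing $\rho=N^{-\gamma}$ with $0<\gamma<\tfrac1d-\tfrac1s$ and $\tau\asymp(\rho\cdot\operatorname{diam}A_{j_0})^{1/2}$ makes all error terms $o(N^{s/d})$; after $\e\to0$ this gives the matching upper bound, hence the asymptotic formula for $\PP_s(A;N)$.

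Finally, the weak-$*$ convergence follows from the asymptotic formula by a deficiency argument. If some asymptotically $s$-optimal $\{\omega_N\}$ failed to satisfy $\nu(\omega_N)\stackrel{*}{\longrightarrow}\tfrac1{\H_d(A)}\H_d|_A$, pass to a subsequence with $\nu(\omega_{N_k})\stackrel{*}{\longrightarrow}\mu\ne\tfrac1{\H_d(A)}\H_d|_A$; then $\mu$ is strictly deficient on some cover piece $A_{j_0}$ (chosen, after a harmless perturbation of the cover, with $\H_d(\partial A_{j_0})=\mu(\partial A_{j_0})=0$), say $\mu(A_{j_0})\le\H_d(A_{j_0})/\H_d(A)-3\delta$, so that $\#(\omega_{N_k}\cap A_{j_0})\le(\H_d(A_{j_0})/\H_d(A)-2\delta)N_k$ for large $k$. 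Running the upper-bound localization with $A_{j_0}$ as the under-populated piece then produces $y\in A$ with $U_{K_s}(y;\omega_{N_k})\le(1-\delta')\tfrac{\sigma_{s,d}}{\H_d(A)^{s/d}}N_k^{s/d}$ for some fixed $\delta'>0$ and infinitely many $k$, whence $P_s(A;\omega_{N_k})<(1-\delta'/2)\PP_s(A;N_k)$ — contradicting asymptotic $s$-optimality. The hardest point throughout is the interior-point construction of step (III): suppressing the potential contribution of configuration points near $\partial A_{j_0}$, which is precisely what the boundary-band averaging combined with the $s>d$ decay is designed to achieve.
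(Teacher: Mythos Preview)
Your cube argument (I) is correct and matches the paper's. There are, however, two genuine gaps in step (III).

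\emph{Lower bound.} You write that the pieces $A_j$ ``cover $\H_d$-almost all of $A$'' and then that ``every $y\in A$ lies in some $A_j$''; these two statements are incompatible. A general compact $A$ in a $C^1$ manifold with $\H_d(\partial A)=0$ cannot be written as a finite union of bi-Lipschitz images of closed $d$-cubes contained in $A$: there is always a residual set $D$ (containing at least the relative boundary), and for $y\in D$ your configuration gives no lower bound on $U_{K_s}(y;\omega_N)$. Since $P_s(A;\omega_N)$ is an infimum over \emph{all} $y\in A$, this kills the argument. The paper's repair uses two extra ingredients you do not mention: a subadditivity inequality $[\underline h_{s,d}(B\cup C)]^{-d/s}\leqslant[\underline h_{s,d}(B)]^{-d/s}+[\underline h_{s,d}(C)]^{-d/s}$ (Lemma~\ref{addit}) together with the comparison $\PP_s(\clos D;N)\geqslant\EE_s(\clos D;N)/(N-1)$ and the energy asymptotics of Theorem~\ref{thenergy}, which give $\underline h_{s,d}(\clos D)\gtrsim\H_d(\clos D)^{-s/d}$ and make the residual contribution negligible.

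\emph{Upper bound.} In your band argument the sub-piece cut off by the selected band is both the configuration domain and the evaluation region, and you then claim the band is ``separated from the evaluation region''. But the band is \emph{adjacent} to the sub-piece, so the (few) band points can lie at distance zero from your chosen minimizer $y$, and even a single such point contributes unboundedly to $U_{K_s}(y;\omega_N)$; ``under-populated'' does not help against one nearby point. The paper (Lemma~\ref{distr}) avoids band averaging entirely by a convexity trick: pass to a chart where the piece is a cube $\tilde\Gamma$, shrink to a \emph{fixed} concentric closed cube $\tilde\Gamma_\upsilon$, and project all points of $\omega_N\cap\Gamma$ onto $\tilde\Gamma_\upsilon$. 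Nearest-point projection onto a convex set is a contraction toward every $y\in\tilde\Gamma_\upsilon$, so the potential at $y$ can only increase, yielding $P_s(A;\omega_N)\leqslant\PP_s(\tilde\Gamma_\upsilon;N_\Gamma)+C_\upsilon N$ with $C_\upsilon$ independent of $N$; then send $\upsilon\to0$ after $N\to\infty$. Incidentally, the paper also inverts your order: lower bound $\Rightarrow$ every small piece receives at least its fair share of points (via this projection estimate) $\Rightarrow$ weak-$*$ limit (Lemma~\ref{lemmadistr}) $\Rightarrow$ upper bound. Your ordering would also work once the two gaps above are closed.
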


We remark that in the special case of $d=p$, the theorem holds for {\em any} compact set $A\subset \R^p$ with $\L_p(\partial A)=0$.  
Establishing this special case plays a central role in the proof of our main theorem in Section~\ref{S2.5}.

Regarding the precise value of the  constant $\sigma_{s,p}$, for the case $p=1$ and $s>1$, Hardin, Kendall and Saff \cite{Hardin2013} proved that
$$
\sigma_{s,1}=2(2^s-1)\zeta(s), 
$$
where $\zeta(s)$ is the classical Riemann zeta-function. For $p=2$ we conjecture, based on the optimality properties of the equi-triangular lattice for the best-covering in $\R^2$, that the value of $\sigma_{s,2}$ for $s>2$ is
\begin{equation}\label{sigmaconj}
\sigma_{s,2}=\frac{3^{s/2}-1}{2}\zeta_\Lambda(s),
\end{equation}
where
$$
\zeta_\Lambda(s) := \sum_{v\in \Lambda\setminus\{0\}}\frac{1}{|v|^s},
$$
is the Epstein zeta-function for the equi-triangular lattice $\Lambda\subset \R^2$ with unit co-volume.   


\bigskip

We illustrate Theorem \ref{mainmainmainmain} with the following examples. 

\begin{example}
For a unit ball $\mathbb{B}^p\subset \R^p$ and $s> p$, Theorem~\ref{mainmainmainmain} asserts that
\begin{equation}\label{ball"}
\lim_{N\to \infty} \frac{\PP_s(\B^{p}; N)}{N^{s/p}}  = \sigma_{s,p}\cdot \left(\frac{\Gamma(p/2+1)}{\pi^{p/2}}\right)^{s/p}
\end{equation}
and, moreover,  for any asymptotically $s$-optimal sequence $\{\omega_N\}_{N\geqslant 1}$, 
\begin{equation}
\nu(\omega_N)\stackrel{*}{\longrightarrow}   \left(\frac{\Gamma(p/2+1)}{\pi^{p/2}}\right) \H_{p}{\raisemath{-2pt}{\big |}}_{\mathbb{B}^p}  \quad  \text{ as } \quad N\to \infty.
\end{equation}
 
 It is interesting to contrast the behavior in the hypersingular case with that for integrable Riesz kernels for the ball.  For $0<s\leqslant p-2$, Erd\'elyi and Saff  \cite{Erdelyi2013} show that for each $N$, the maximal $N$-point $s$-polarization configurations consist of $N$ points at the center of the ball (so
 $\PP_s(\B^{p}; N)=N$ for  $N\geqslant1$).
 For $p-2<s<p$, Simanek \cite{Simanek2015} has shown that the limiting distribution of optimal polarization configurations is the $s$-equilibrium measure for the corresponding minimal Riesz $s$-energy problem.  
\end{example}

\begin{example}
For a unit sphere $\S^{p-1}\subset \R^p$ and $s> p-1$, Theorem~\ref{mainmainmainmain} yields
\begin{equation}\label {sphere''}
\lim_{N\to \infty} \frac{\PP_s(\S^{p-1}; N)}{N^{s/(p-1)}}
 = \frac {\sigma_{s,p-1}}{\mathcal H_{p-1}(\S^{p-1})^{s/(p-1)}}=\sigma_{s,p-1}\cdot \left(\frac{\Gamma(p/2)}{2\pi^{p/2}}\right)^{s/(p-1)},
\end{equation}
and that, for   any asymptotically $s$-optimal sequence $\{\omega_N\}_{N\geqslant 1}$, 
\begin{equation}
\nu(\omega_N)\stackrel{*}{\longrightarrow} \left(\frac{\Gamma(p/2)}{2\pi^{p/2}}\right)\H_{p-1}{\raisemath{-2pt}{\big |}}_{\S^{p-1}}  \quad  \text{ as } \quad N\to \infty.
\end{equation}
  
  For the integrable Riesz kernel; that is, $0<s<p-1$,  it is shown in \cite{Simanek2015}   that the limiting distribution of optimal polarization configurations is the normalized surface area measure on the sphere.  Also, see \cite{reznikov2016minimum} for related results. 
\end{example}

\begin{example}
For any $C^1$-smooth curve $\Gamma$ with $0<\H_1(\Gamma)<\infty$ and any $s> 1$,  Theorem~\ref{mainmainmainmain} gives
\begin{equation}
\lim_{N\to \infty} \frac{\PP_s(\Gamma; N)}{N^{s}} =\frac{2(2^s-1)\zeta(s)}{\H_1(\Gamma)^{s}}.
\end{equation}
\end{example}

In \cite{Borodachov2014}, it is established that for the case $s=1$, that the limiting distribution of optimal $s$-polarization configurations
on smooth curves is normalized arclength measure, while
for the case of  integrable Riesz kernels on smooth curves,    every limit distribution of optimal polarization configurations is a solution to the continuous $s$-polarization problem \cite{reznikov2016minimum}.

We next turn to an extension of Theorem~\ref{mainmainmainmain} where we introduce a weight function.  
 For a function $w\colon A\times A\rightarrow [0,\infty]$, an $N$-point multiset $\omega_N=\{x_1, \ldots, x_N\}\subset A$ and $B\subset A$, we set
 \begin {equation}\label {wpot}
U^w_s(y; \omega_N):= \sum_{j=1}^N \frac{w(y,x_j)}{|y-x_j|^s},\qquad (y\in A),
\end {equation}
\begin {equation}\label {a}
P^w_s(B; \omega_N):=\inf_{y\in B}  U^w_s(y; \omega_N),
\end {equation}
and, define the {\it weighted $N$-th $(s,w)$-polarization (or Chebyshev) constant of $A$} by
\begin {equation}\label {Setting}
\PP^w_{s}(A; N):=\sup_{\omega_N\subset A} P^w_s(A; \omega_N).
\end {equation}

  In terms of the injector/dosage model discussed in Section~\ref{S1},  a weight function 
 can be used to introduce spatial inhomogeneity into  the strength of the sources as well as the dosage constraint. 
 For example, consider  $w(x,y)$ of the form $u(x)/v(y)$ for some positive, continuous functions  $u$ and $v$ on $A$.  Since  
   \begin{equation}
U^w_s(y; \omega_N)=\frac{1}{v(y)}U^{1\otimes u}_s(y; \omega_N),
\end{equation}
(where $1\otimes u(x,y)=u(x)$ for $x,y\in A$) the $N$-point $(s,w)$-polarization problem can be recast as locating  $N$ sources at points $x_k\in A$ of `strength' $u(x_k)$  so as to maximize the constant $C$ such that the `dosage' $U^{1\otimes u}_s(y; \omega_N)$  is at least $C v(y)$ for each $y\in A$.
Theorem~\ref{mainmainmainweight}  below states that the limiting density of sources as $N\to \infty$ for this weighted problem as the number sources goes is proportional to $(v(x)/u(x))^{d/s} \textup{d}\H_d(x)$.

We note that if $A$ is a compact set and the weight $w$ is lower semi-continuous and strictly positive on $A\times A$, then for any $N$ there exists a configuration $\omega_N^*=\{x_1^*, \ldots, x_N^*\}$ and a point $y^*$ such that
$$
\PP^w_s(A;N)=P^w_s(A; \omega_N^*) =U^w_s(y^*; \omega_N^*).
$$
For such a configuration, the potential
$
U^{w}_s(y): = U^w_s(y; \omega_N^*),
$
is called  an {\em optimal $N$-point   Riesz $(s,w)$-potential} for $A$.  Similarly to the unweighted case, we say that a sequence 
$\{\omega_N\}_{N\geqslant 1}$  of $N$-point configurations in $A$ is {\em asymptotically $(s,w)$-optimal } if
$$
\lim_{N\to \infty}\frac{P_s^w(A; \omega_N)}{\mathcal P_s^w(A;N)}=1.
$$

Our second consequence of Theorem~\ref{mainmain}   concerns the asymptotic behavior of $\PP^w_s(A;N)$ for a class of weights $w$. Denote
\begin{equation}
\tau_{s,d}(N):=\begin{cases}N^{s/d}, \; &s>d, \\ N\log N, \; &s=d. \end{cases}
\end{equation}
We prove the following.
\begin{theorem}\label{mainmainmainweight}
Let $d$ and $p$ be positive integers with $d\leqslant p$. Suppose $A\subset \R^p$ is a compact subset of a $d$-dimensional $C^1$-manifold with $\H_d(\partial A)=0$, and $w\in C(A\times A)$ with $w(x,x)$  positive for all   $x\in A$. Then for any $s\geqslant d$, 
\begin{equation}\label {h_sp111}
\lim_{N\to\infty}\frac{\PP^w_s(A; N)}{\tau_{s,d}(N)}=\frac{\sigma_{s,d}}{[\H^{s,w}_d(A)]^{s/d}},
\end{equation}
where, for any measurable $B\subset\R^p$,  
\begin{equation}\label{Hswdef}
\H^{s,w}_d(B):=\int_{B\cap A}w^{-d/s}(x,x) \textup{d}\H_d(x)
\end{equation}
and $\sigma_{s,d}$ for $s>d$ is as in Theorem~\ref{mainmainmainmain} and $\sigma_{d,d}:=\text{{\rm Vol}}({\mathbb{B}}^d)$.  
Moreover, if $\H_d(A)>0$, then for any asymptotically $(s,w)$-optimal sequence $\{\omega_N\}_{N\geqslant 1}$, 
\begin{equation}\label{maindistr111}
\nu(\omega_N)\stackrel{*}{\longrightarrow} \frac{1}{\H^{s,w}_d(A)} \H^{s,w}_d  \quad  \text{ as } \quad N\to \infty.
\end{equation}
\end{theorem}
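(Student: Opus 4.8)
The plan is to reduce the weighted statement to the unweighted one, Theorem~\ref{mainmainmainmain} (which the excerpt tells us is proved first, in the special case $d=p$, and then bootstrapped to $C^1$ manifolds), by a localization-and-partition argument together with a continuity/compactness comparison of the weight $w$ against constants on small pieces.

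First I would handle the \emph{lower bound} in \eqref{h_sp111}. Cover $A$ by finitely many sufficiently small relatively open pieces $A_1,\dots,A_m$ on which $w(x,y)$ is nearly constant: since $w$ is continuous on the compact set $A\times A$ and $w(x,x)>0$, for any $\e>0$ one can choose the pieces so that $(1-\e)w(x_i,x_i)\le w(x,y)\le(1+\e)w(x_i,x_i)$ for $x,y\in A_i$, where $x_i\in A_i$ is a reference point. Allot $N_i\approx N\cdot\mu^*(A_i)$ points to $A_i$, where $\mu^*$ is the candidate limiting measure $\H^{s,w}_d/\H^{s,w}_d(A)$, and on each $A_i$ place an asymptotically $s$-optimal (unweighted) configuration. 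On $A_i$ the weighted potential of the points in $A_i$ is, up to the factor $(1\pm\e)w(x_i,x_i)$, the unweighted potential, which by Theorem~\ref{mainmainmainmain} is at least $(1-o(1))\sigma_{s,d}N_i^{s/d}/\H_d(A_i)^{s/d}$; the contribution of points from the \emph{other} pieces is nonnegative and only helps. Choosing the proportions $\mu^*(A_i)$ to equalize (to leading order) the quantities $w(x_i,x_i)\,\sigma_{s,d}N_i^{s/d}/\H_d(A_i)^{s/d}$ — which, after writing $N_i=N\mu^*(A_i)$ and optimizing, forces $\mu^*(A_i)\propto w(x_i,x_i)^{-d/s}\H_d(A_i)=\H^{s,w}_d(A_i)$ — yields $\PP^w_s(A;N)\ge(1-C\e)\,\sigma_{s,d}N^{s/d}/[\H^{s,w}_d(A)]^{s/d}$, and letting $\e\to0$ gives ``$\ge$''. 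The case $s=d$ is identical with $N^{s/d}$ replaced by $\tau_{d,d}(N)=N\log N$ and $\sigma_{d,d}=\mathrm{Vol}(\B^d)$, using the $s=d$ asymptotics \eqref{s=d}.

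For the \emph{upper bound} I would argue by contradiction from a near-optimal sequence $\{\omega_N\}$. Using the same partition $\{A_i\}$, the key point is a lower estimate on $\PP^w_s$ forces the points to distribute so as not to ``starve'' any piece: if for some $i$ the fraction $\#(\omega_N\cap \widehat A_i)/N$ (over a slightly enlarged neighborhood $\widehat A_i$) stays below $(1-\e)\H^{s,w}_d(A_i)/\H^{s,w}_d(A)$ along a subsequence, then evaluating $U^w_s(\,\cdot\,;\omega_N)$ at a well-chosen point $y\in A_i$ — where the only points that contribute at scale $N^{-1/d}$ are those \emph{in} $\widehat A_i$, the others contributing a bounded amount because $s>d$ forces the tail $\sum_{x\notin\widehat A_i}|y-x|^{-s}$ to be $O(N)$ uniformly (this is the standard ``short-range'' localization used in the unweighted proof) — gives $\min_{y\in A_i}U^w_s(y;\omega_N)\le(1+\e)w(x_i,x_i)\,\mathcal P_s(\widehat A_i;\#(\omega_N\cap\widehat A_i))+O(N)$, which by Theorem~\ref{mainmainmainmain} is strictly smaller than the lower bound just established, a contradiction for $N$ large. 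This simultaneously shows ``$\le$'' in \eqref{h_sp111} and, since $\e$ is arbitrary and the pieces can be taken arbitrarily fine, pins the limiting distribution: every weak-star limit point $\mu$ of $\nu(\omega_N)$ satisfies $\mu(A_i)\ge(1-o(1))\H^{s,w}_d(A_i)/\H^{s,w}_d(A)$ for all $i$ in every partition, hence $\mu=\H^{s,w}_d/\H^{s,w}_d(A)$, giving \eqref{maindistr111}.

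The main obstacle I anticipate is the \emph{uniform short-range localization} in the upper bound: making rigorous that, for a near-optimal $\omega_N$, the contribution to $U^w_s$ at a suitably chosen test point from points outside the local piece is $O(N)=o(\tau_{s,d}(N))$, uniformly over the choice of piece and of test point. This requires an a priori separation or density estimate for asymptotically optimal configurations — one must first establish that near-optimal configurations cannot have too many points in any small ball (otherwise a deep ``hole'' appears elsewhere and the polarization drops), a regularity fact that in the unweighted setting underlies the proof of Theorem~\ref{mainmainmainmain} and should transfer to the weighted case since $w$ is bounded above and below on $A\times A$. Once that a priori control is in hand, the passage from constants-on-pieces to the integral $\int_A w^{-d/s}(x,x)\,\textup{d}\H_d(x)$ is a routine Riemann-sum/continuity argument, and the weak-star statement follows from the matching of upper and lower bounds in the standard way (as in \cite{Farkas2008}).
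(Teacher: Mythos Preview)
Your approach is essentially the paper's: Theorem~\ref{mainmainmainweight} is derived as a special case of Theorem~\ref{mainmain}, whose proof follows the same localize-and-compare-to-unweighted scheme you describe (lower bound by allotting points to small pieces where $w$ is nearly constant, upper bound by exhibiting a test point in a piece with too few points). However, your anticipated ``main obstacle'' --- needing a priori separation or density estimates for near-optimal configurations to control the far-field contribution --- is unnecessary, and this is the one substantive correction. The short-range localization is elementary once you test not on all of $A_i$ but on a slightly \emph{shrunk} set $\Gamma_\upsilon\subset\Gamma$: every $x\in\omega_N\setminus\Gamma$ then satisfies $|y-x|\geqslant c(\upsilon)>0$ for all $y\in\Gamma_\upsilon$, so the tail is at most $C_\upsilon N$ simply because each of the $N$ terms is bounded and $w$ is bounded on the relevant closed off-diagonal set; since $N=o(\tau_{s,d}(N))$ for $s\geqslant d$, this is negligible. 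No regularity of $\omega_N$ is used. The local part is then handled by mapping via a bi-Lipschitz chart to $\R^d$ and projecting the points $\omega_N\cap\Gamma$ onto the convex cube $\tilde\Gamma_\upsilon$; projection onto a convex set only increases the potential at points of $\tilde\Gamma_\upsilon$, yielding $\min_{y\in\Gamma_\upsilon}U_s(y;\omega_N\cap\Gamma)\leqslant(1+\ep)^s\PP_s(\tilde\Gamma_\upsilon;N_\Gamma)$, to which Theorem~\ref{mainmainmainmain} applies directly. (This projection step is what repairs the slightly loose inequality you wrote with $\PP_s(\widehat A_i;\cdot)$: as stated, minimizing over the smaller set $A_i\subset\widehat A_i$ goes the wrong way.)

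The paper also orders the endgame differently from your proposal: it first proves the lower bound, then uses \emph{that alone} (via the localization above, applied both to $B$ and to $A\setminus B$) to establish the limiting distribution \eqref{maindistr111}, and only afterwards deduces the matching upper bound on $\o h^w_{s,d}(A)$ from the now-known distribution. This bootstrapping avoids the direct contradiction argument you sketch, though the two routes are close cousins.
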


\bigskip


\section {Statement of main result}
\label{S2.5}

In this section we state our main theorem.   For this purpose we first introduce some needed definitions and notation concerning geometric properties of the set
$A$ as well as continuity and positivity properties of the considered weight   $w$.


\begin{defin}\label{bilip}
A function $\phi:A\subset \R^p\to \R^d$ is said to be {\em bi-Lipschitz with constant $C$} if
$$
C^{-1}|x-y|\leqslant |\phi(x)-\phi(y)|\leqslant C|x-y|, \qquad (x,y)\in A,
$$
while $\phi$ is said to be  {\em Lipschitz with constant $C$} if the second inequality above holds.

A set $A\subset \R^p$ is called {\it $(\H_d, d)$-rectifiable}, $d\leqslant p$, if $\H_d(A)<\infty$ and $A$ is the union of at most countably many images of bounded sets in $\R^d$ under Lipschitz maps  and a set of $\H_d$-measure zero (see \cite{Mattila1995}).

Further, we say that $A$ is {\it $d$-bi-Lipschitz at $x\in A$} if, for any $\ep>0$, there exists a number $\delta>0$,  and a bi-Lipschitz function $\vf_{x, \epsilon}\colon B(x, \delta)\cap A\rightarrow \R^d$ with constant $(1+\epsilon)$ such that the set 
$ \vf_{x, \epsilon}(B(x,\delta)\cap A)\subset \R^d$ is open.

By $A_{\rm bi}$ we denote the set of all points $x\in A$ at which $A$ is $d$-bi-Lipschitz. Further, denote $A_{\rm bi}^c:=A\setminus A_{\rm bi}$.
\end{defin}

Notice that any set $A\subset \R^p$ is $(\H_p, p)$-rectifiable with $A_{\rm bi}^c=\partial A$. We remark that any compact set $A$ with $\H_d(A)<\infty$ and $\H_d(A_{\rm bi}^c)=0$ is $(\H_d, d)$-rectifiable. Thus, any embedded compact $C^1$-smooth $d$-dimensional manifold with $\H_d(\partial A)=0$ is $(\H_d, d)$-rectifiable. In particular, if this manifold is closed, then $A_{\rm bi}^c=\emptyset$. Further, a finite union of $C^1$-smooth arcs is an $(\H_1, 1)$-rectifiable set.

The following notion of Minkowski content often arises in   geometric measure theory.
\begin{defin}\label{content}
Let $A\subset \R^p$ be a bounded set, $A(\epsilon):=\{x\in \R^p\colon \dist(x, A)< \ep\}$ and, for $m\geqslant 1$, let $\beta_{m}$ denote the volume of the $m$-dimensional unit ball (we also set $\beta_0:=1$). If the limit
$$
\mathcal{M}_d(A):=\lim_{\ep\to 0^+}\frac{\L_p(A(\epsilon))}{\beta_{p-d}\ep^{p-d}}
$$
exists, then it is called the {\it $d$-Minkowski content} of $A$.
\end{defin}

We remark that the notion of Minkowski content has been particularly useful in the study of discrete $s$-energy where   the equality $\H_d(A)=\mathcal{M}_d(A)$ plays an important role in the proof of asymptotic results; see Theorem~\ref{thenergy}.

We equip the set $A\times A$ with the metric
$$
{\rm dist}((x_1,y_1),(x_2,y_2))=\sqrt {\left|x_1-x_2\right|^2+\left|y_1-y_2\right|^2},
$$ where $x_1,x_2,y_1,y_2\in A$.
Concerning the weight $w(x,y)$ we utilize the following definition from \cite{Borodachov2008}.

\begin{defin}\label{CPDweight}
Suppose $A\subset \R^p$ is a compact set. We call a function $w\colon A\times A\to [0, \infty]$ a {\it CPD-weight \footnote{Here CPD stands for (almost) continuous and positive on the diagonal} on $A\times A$ with parameter $d$} if the following properties hold:
\begin{enumerate}
\item $w$ is continuous (as a function on $A\times A$) at $\H_d$-almost every point of the diagonal $D(A):=\{(x,x)\colon x\in A\}$;
\item there is a neighborhood $G$ of $D(A)$ (relative to $A\times A$), such that $\inf_G w(x,y)>0$;
\item $w$ is bounded on any closed subset $B\subset A\times A$ with $B\cap D(A)=\emptyset$.
\end{enumerate}
\end{defin}
In what follows, we define
\begin{equation}
\u{h}^w_{s,d}(A):=\liminf_{N\to\infty} \frac{\PP^w_{s}(A; N)}{\tau_{s,d}(N)}, \; \; \o{h}^w_{s,d}(A):=\limsup_{N\to \infty} \frac{\PP^w_{s}(A; N)}{\tau_{s,d}(N)}.
\end{equation}
If $\u{h}^w_{s,d}(A)=\o{h}^w_{s,d}(A)$, we denote
\begin{equation}
h^w_{s,d}(A):=\lim_{N\to \infty} \frac{\PP^w_s(A; N)}{\tau_{s,d}(N)}.
\end{equation}
If the function $w$ is identically equal to $1$, we drop the superscript and write $\u{h}_{s,d}$, $\o{h}_{s,d}$, and $h_{s,d}$.

We are ready to state our most general theorem.
\begin{theorem}\label{mainmain}
Let $d$ and $p$ be positive integers with $d\leqslant p$. Suppose $A\subset \R^p$ is a compact set with $\H_d(A)=\mathcal{M}_d(A)<\infty$ and $\H_d(\clos(A_{\rm bi}^c))=0$. Assume $w$ is a CPD-weight on $A\times A$ with parameter $d$. Then for any $s\geqslant d$, 
\begin{equation}\label {h_sp}
h^w_{s,d}(A)=\lim_{N\to\infty}\frac{\PP^w_s(A; N)}{\tau_{s,d}(N)}=\frac{\sigma_{s,d}}{[\H^{s,w}_d(A)]^{s/d}}.
\end{equation}
Moreover, if $\H_d(A)>0$, then for any asymptotically $(s,w)$-optimal sequence $\{\omega_N\}_{N\geqslant 1}$, 
\begin{equation}\label{maindistr}
\nu(\omega_N)\stackrel{*}{\longrightarrow} \frac{1}{\H^{s,w}_d(A)} \H^{s,w}_d  \quad  \text{ as } \quad N\to \infty.
\end{equation}
\end{theorem}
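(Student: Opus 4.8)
\textbf{Proof proposal for Theorem~\ref{mainmain}.}

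The plan is to prove the full theorem by reducing it, in several stages of increasing generality, to the single special case $A=\mathcal Q_p$, $w\equiv 1$, $d=p$, which we establish directly. First I would prove existence, positivity and finiteness of $\sigma_{s,p}=\lim_N \PP_s(\mathcal Q_p;N)/N^{s/p}$ for $s>p$ by a subadditivity-type argument: partition $\mathcal Q_p$ into $m^p$ congruent subcubes, place near-optimal configurations on each, and use the nonnegativity of the Riesz kernel to bound $\PP_s(\mathcal Q_p; m^p k)$ from below by (essentially) $m^p \PP_s(\frac1m\mathcal Q_p; k) = m^{p-s}\cdot m^p\, \PP_s(\mathcal Q_p;k)$; the upper bound $\PP_s=O(N^{s/p})$ is already available from \eqref{upper}. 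A Fekete-type lemma then yields the limit. The positivity of $\sigma_{s,p}$ follows since a single point already gives potential $\asymp (\diam)^{-s}$ near itself, or more robustly from the best-covering connection; finiteness is \eqref{upper}. This argument simultaneously gives the scaling law $\PP_s(tA;N)=t^{-s}\PP_s(A;N)$ and, by the same subcube bookkeeping, the general cube-to-region statement $h_{s,p}(A)=\sigma_{s,p}/\L_p(A)^{s/p}$ for any compact $A\subset\R^p$ with $\L_p(\partial A)=0$: cover $A$ from inside and outside by grids of small cubes, using monotonicity of $P_s$ under adding points for the lower bound and, crucially, the observation that near-optimal configurations cannot waste too many points outside the support for the upper bound.

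\medskip

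Next I would pass from $d=p$ to general $d\le p$ and from $w\equiv1$ to a CPD-weight simultaneously, exploiting the $d$-bi-Lipschitz and rectifiability hypotheses. The key geometric input is that, outside a set of $\H_d$-measure zero (namely $\clos(A_{\rm bi}^c)$), every point of $A$ has a neighborhood carried by a $(1+\e)$-bi-Lipschitz chart onto an open subset of $\R^d$; since $\H_d(A)=\mathcal M_d(A)$, such charts can be chosen to also nearly preserve $\H_d$-measure. Using a Besicovitch/Vitali-type covering one decomposes $A$ (up to $\H_d$-null sets) into finitely many small pieces $A_1,\dots,A_M$, each bi-Lipschitz-equivalent with small distortion to a cube in $\R^d$, on each of which $w(x,y)$ is within a factor $(1+\e)$ of the constant $w(x_i,x_i)$. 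On each piece the previously established Euclidean cube result applies, giving $P_s^w(A_i;\cdot)$-asymptotics governed by $\sigma_{s,d} w(x_i,x_i)^{-?}/\H_d(A_i)^{s/d}$; the exponent of the weight is fixed by the scaling $U_s^w\mapsto c U_s^w$ under $w\mapsto cw$ together with the $t^{-s}$ vs.\ $\H_d\mapsto t^d\H_d$ scaling, which yields exactly the density $w^{-d/s}(x,x)\,\dd\H_d$ appearing in $\H_d^{s,w}$. The bi-Lipschitz distortion enters only through $(1+\e)^{\pm s}$ factors which vanish as $\e\to0$.

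\medskip

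The genuinely delicate step — and I expect it to be the main obstacle — is the \emph{splitting/combination} argument that assembles the global asymptotics and the equidistribution statement from the pieces, because polarization, unlike energy, has no monotonicity ($A\subset B$ gives nothing) and the minimum defining $P_s^w$ is global, so one cannot simply add potentials piecewise. The right device is a minimax/duality comparison: for the lower bound on $\PP_s^w(A;N)$, distribute the $N$ points among the pieces $A_i$ in proportions $\propto \H_d^{s,w}(A_i)$ using the near-optimal configurations from the charts, and check that the ``cross'' contributions from points in $A_j$ to the potential at $y\in A_i$ are nonnegative (hence harmless) while the ``diagonal'' contribution already achieves the target — this needs a uniform lower bound, valid for \emph{every} $y\in A_i$, not just the minimizing one, which is where the $(1+\e)$-bi-Lipschitz openness of the charts is used to avoid boundary effects between pieces. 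For the upper bound on $\PP_s^w(A;N)$ one argues by contradiction: given an optimal $\omega_N$, let $N_i:=\#(\omega_N\cap A_i)$; the min over $y$ is attained in some piece $A_{i_0}$, and there only the $N_{i_0}+O(1)$ nearby points contribute to leading order (the kernel being bounded away from the piece), so $\PP_s^w(A;N)\lesssim \PP_s^w(A_{i_0}; N_{i_0})+O(1) \sim \sigma_{s,d} w(x_{i_0},x_{i_0})^{-1}\tau_{s,d}(N_{i_0})/\H_d(A_{i_0})^{s/d}$; optimizing the allocation $(N_i)$ subject to $\sum N_i=N$ then reproduces the harmonic-mean-type formula $\sigma_{s,d}/\H_d^{s,w}(A)^{s/d}$ and, by strict concavity/convexity of the relevant allocation functional, forces $N_i/N\to \H_d^{s,w}(A_i)/\H_d^{s,w}(A)$, which is exactly the weak-star convergence \eqref{maindistr} once one refines the partition. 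Care is needed with the $s=d$ case because $\tau_{d,d}(N)=N\log N$ is not homogeneous, so the allocation optimization must be done to within lower-order terms; this is handled by noting $\tau_{d,d}(N_i)=N_i\log N_i = (N_i/N)\,N\log N + O(N_i\log(N/N_i))$ and that the error is negligible after summing. Finally, the null sets $\clos(A_{\rm bi}^c)$ and the pairwise-overlap sets are absorbed because they carry no $\H_d^{s,w}$-mass and, by \eqref{upper} applied on a shrinking neighborhood of them, contribute $o(\tau_{s,d}(N))$ to the polarization and an asymptotically negligible fraction of any asymptotically optimal configuration.
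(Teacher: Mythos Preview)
Your overall architecture---establish the cube limit, localize via $(1+\ep)$-bi-Lipschitz charts, and balance an allocation among pieces---matches the paper's. The logical order differs: you obtain the upper bound on $\o h_{s,d}^w(A)$ directly by a max--min optimization over allocations $(N_i)$ and then read off the distribution from uniqueness of the optimizer, whereas the paper first proves only the lower bound for $\u h_{s,d}^w$ (Lemma~\ref{weightedfrombelow}), uses it to derive the limiting distribution (Lemma~\ref{lemmadistr}), and only afterward deduces the upper bound (Section~\ref{S9}). Both orders can be made rigorous; your max--min step is essentially the subadditivity Lemma~\ref{addit} run in reverse.

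There is, however, a genuine gap in your handling of the exceptional and leftover sets for the \emph{lower} bound. Your construction places all $N$ points on the chart pieces $A_i$ in proportions $\propto\H_d^{s,w}(A_i)$ and checks that for $y\in A_i$ the diagonal contribution reaches the target. But the infimum defining $P_s^w(A;\omega_N)$ is over \emph{all} $y\in A$, including points of the uncovered remainder $D$ (with $0<\H_d(D)<\ep$) and of $\clos(A_{\rm bi}^c)$; at such $y$ your configuration may produce only $O(N)$ potential. Concretely, take $A=[0,1]\cup\{2\}$, $d=p=1$, $s>1$: all chart pieces lie in $[0,1]$, no point is placed at $2$, and the potential there is at most $N\ll N^s$. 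Your appeal to \eqref{upper} is in the wrong direction---that is an \emph{upper} bound on polarization. The paper closes this by allocating a positive (arbitrarily small) fraction of points to each leftover set and invoking the comparison $\PP_s^w(\,\cdot\,;N)\geqslant \EE_s^w(\,\cdot\,;N)/(N-1)$ together with the energy asymptotics of Theorem~\ref{thenergy} (see Corollary~\ref{measurezero}): a compact subset $E\subset A$ with $\H_d(E)=\mathcal M_d(E)=0$ has $\u h_{s,d}^w(E)=\infty$, and one with $\H_d(\clos(E))<\ep$ has $\u h_{s,d}^w(\clos(E))\gtrsim \ep^{-s/d}$. These feed into the subadditivity inequality $[\u h^w_{s,d}(B\cup C)]^{-d/s}\leqslant[\u h^w_{s,d}(B)]^{-d/s}+[\u h^w_{s,d}(C)]^{-d/s}$ of Lemma~\ref{addit}, after which the leftover terms vanish as $\ep\to 0$. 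Without this device, or some equivalent lower bound on $\u h$ for small sets, your lower bound on $\u h_{s,d}^w(A)$ does not close.
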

In the case $w=1$ and $s=d$ (recall that $\sigma_{d,d}=\text{Vol}(\mathbb B^d)=\beta_d$), Borodachov and Bosuwan \cite{Borodachov2014} proved the above theorem for sets $A=\cup_{j=1}^{m} A_j$, where each $A_j$ is a compact subset of a $C^1$-smooth $d$-dimensional manifold in $\R^p$, with $\mathcal H_d(A_j\cap A_k)=0$ if $j\not=k$. 

We remark that the equality $\H_d(A)=\mathcal{M}_d(A)$ holds if $A$ is a {\it $d$-rectifiable} compact set; that is, $A$ is the image of a compact subset of $\R^d$ under a Lipschitz map (in particular, this equality holds if $d=p$). Moreover, if $A$ is $(\H_d, d)$-rectifiable with $\H_d(A)=\mathcal{M}_d(A)$, then the same is true for every compact subset of $A$. For details, see \cite[Chapter 7]{Borodachov2015}.

We further remark that any embedded $d$-dimensional compact $C^1$-smooth manifold $A$ with $\H_d(\partial A)=0$ satisfies conditions of the theorem. Moreover, any finite union of $C^1$-smooth arcs also satisfies these conditions. On the other hand, a ``fat'' Cantor set $\mathcal{C}\subset [0,1]$ with $\H_1(\mathcal{C})>0$ (thus, of dimension 1) does not satisfy the condition $\H_1(\mathcal{C}_{\rm bi}^c)=0$.

\section{Comparison with energy asymptotics}\label {S3}

In this section we provide a sufficient condition for   $h_{s,d}^w(A)$ to be infinite when     $s>d$ and   sets $A$ that are sufficiently small (see Corollary~\ref{measurezero}).  First we recall  a result concerning the asymptotics of weighted discrete energy in the hyper-singular case $s\geqslant d$. For a compact set $A\subset \R^p$, weight $w\colon A\times A\to [0,\infty]$ and an integer $N\geqslant 2$, define
$$
\EE^w_s(A; N):=\inf\left\{ \sli_{\substack{x, y\in \omega_N \\ x\not= y}} \frac{w(x, y)}{|x-y|^s} \colon \omega_N\subset A, \, \#\omega_N=N\right\}.
$$
If the weight $w$ is identically equal to $1$, we drop the superscript $w$. For an infinite set $A$, any $s>0$, and a non-negative weight $w$ on $A\times A$ we, similar to \cite[Theorem 2.3]{Erdelyi2013}), obtain
\begin{equation}\label{polarenergy}
\PP^w_s(A; N)\geqslant \frac{\EE^w_s(A;N)}{N-1}, \; \; \; \; \;  N\geqslant 2	.
\end{equation}

The following theorem, proved by Borodachov, Hardin and Saff, \cite{Borodachov2008, Borodachov2015}, describes the asymptotic behavior of $\EE^w_s(A;N)$.
\begin{theorem}\label{thenergy}
Let $d$ and $p$ be positive integers with $d\leqslant p$. Suppose $A\subset \R^p$ is a compact $(\H_d, d)$-rectifiable set with $\mathcal{M}_d(A)=\H_d(A)$ and $w$ is a CPD-weight on $A\times A$ with parameter $d$. If $s>d$, then
for any compact set $B\subset A$,
$$
\lim_{N\to \infty}\frac{\EE^w_s(B; N)}{N^{1+s/d}} = \frac{C_{s,d}}{[\H_d^{s,w}(B)]^{s/d}},
$$
where $C_{s,d}$ is a finite positive constant that depends only on $s$ and $d$. If $A$ is a compact subset of a $d$-dimensional $C^1$-smooth manifold, then for any compact set $B\subset A$,
$$
\lim_{N\to \infty}\frac{\EE^w_d(B; N)}{N^2\log N} = \frac{\beta_d}{\H_d^{d,w}(B)},
$$
where $\beta_d=\text{Vol}(\mathbb{B}^d)$.

In particular, if $d=p$ and $A\subset \R^p$ is a compact set with $\L_p(A)=0$, then both limits above are equal to $\infty$.
\end{theorem}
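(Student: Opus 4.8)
\medskip

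The plan is to reduce everything to the unit cube and then transfer via bi-Lipschitz charts, following the template already established for polarization in Section~\ref{S2} but now in the energy setting. First I would treat the model case $A=\mathcal Q_p=[0,1]^p$ with $w\equiv 1$ and $s>p$: one shows that $N^{-1-s/p}\EE_s(\mathcal Q_p;N)$ converges to a positive finite constant $C_{s,p}$ by a subadditivity (or almost-subadditivity) argument. Partition $\mathcal Q_p$ into $k^p$ congruent subcubes of side $1/k$; distributing $N$ points as evenly as possible among them, one gets an upper bound for $\EE_s(\mathcal Q_p;N)$ in terms of $k^p$ copies of $\EE_s$ on a cube of side $1/k$ (using $s$-homogeneity of $K_s$, each such term scales by $k^s$), plus a cross-energy term between distinct subcubes which is lower order because $s>p$ guarantees the tail sum $\sum_{\|v\|\ge 1}|v|^{-s}$ over the integer lattice converges. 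The matching lower bound comes from the simple observation that points in different subcubes only decrease the energy, so $\EE_s$ of the whole cube dominates the sum of the $\EE_s$ of the pieces. A standard Fekete-type lemma on $a_N:=\EE_s(\mathcal Q_p;N)/N^{1+s/p}$ then yields existence and positivity of the limit; finiteness follows from the upper bound obtained by placing points on a scaled integer grid. For $s=d=p$, the same scheme produces the $N^2\log N$ normalization, the $\log N$ arising exactly because $\sum_{0<\|v\|\le k}|v|^{-p}\sim \beta_p\log k$, and a more careful counting identifies the constant as $\beta_p=\text{Vol}(\mathbb B^p)$.

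\medskip

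Next I would pass from the cube to a general compact subset $B$ of a $d$-dimensional $C^1$ manifold. Cover $B$ (up to an $\H_d$-null set, using $\H_d(\partial A)=0$ and $\H_d(B_{\rm bi}^c)=0$) by finitely many coordinate patches on which the manifold chart is bi-Lipschitz with constant $1+\e$; pulling back the flat-cube asymptotics through these charts, and using that a bi-Lipschitz map with constant $1+\e$ distorts $K_s$-energy by a factor $(1+\e)^{\pm s}$ and distorts $\H_d$ by $(1+\e)^{\pm d}$, one obtains upper and lower bounds for $\EE_s(B;N)$ that differ from $C_{s,d}\,[\H_d(B)]^{-s/d}N^{1+s/d}$ by a factor tending to $1$ as $\e\to0$. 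The allocation of points among patches is governed by the usual optimization: to balance the contributions $C_{s,d}[\H_d(B_i)]^{-s/d}N_i^{1+s/d}$ subject to $\sum N_i=N$, one takes $N_i$ proportional to $\H_d(B_i)$, and the resulting minimum is $C_{s,d}[\sum_i \H_d(B_i)]^{-s/d}N^{1+s/d}=C_{s,d}[\H_d(B)]^{-s/d}N^{1+s/d}$, exactly the $\ell^{s/d}$-type aggregation that makes Hausdorff measure emerge. The hypothesis $\H_d(A)=\mathcal M_d(A)$, together with its inheritance by compact subsets, is what legitimizes replacing $\H_d$ by Minkowski content in the covering estimates, which is how the chart images (open subsets of $\R^d$, not cubes) are handled. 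The weighted case is then a routine perturbation: since $w$ is continuous and positive at $\H_d$-a.e.\ diagonal point, on each small patch $w(x,y)\approx w(x_i,x_i)$ is essentially constant, so the weighted energy on that patch is $(w(x_i,x_i)+o(1))$ times the unweighted one; carrying this through the allocation step replaces $\H_d(B_i)$ by $\int_{B_i}w^{-d/s}(x,x)\,\d\H_d(x)=\H_d^{s,w}(B_i)$, which is precisely the definition in \eqref{Hswdef}. Care is needed where $w$ is merely lower semicontinuous or unbounded off the diagonal, but property (3) of a CPD-weight controls cross-patch terms and property (2) bounds $w$ below near the diagonal, so the contributions one cannot pin down have $\H_d$-measure zero and wash out in the limit.

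\medskip

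Finally, the last sentence: if $d=p$ and $\L_p(A)=0$, then $\H_p^{s,w}(A)=\int_A w^{-p/s}(x,x)\,\d\L_p(x)=0$ (the integrand is a nonnegative function against the zero measure), so the right-hand sides $C_{s,p}[\H_p^{s,w}(A)]^{-s/p}$ and $\beta_p/\H_p^{d,w}(A)$ are formally $+\infty$; to make this rigorous one runs the \emph{lower} bound argument directly — cover $A$ by a small Minkowski neighborhood of volume $\L_p(A(\e))\to 0$, whose $\EE_s$-asymptotics by the cube case with $w$ bounded below give $\EE_s^w(A;N)\ge c\,[\L_p(A(\e))]^{-s/p}N^{1+s/p}$ for all large $N$, and let $\e\to0$ to conclude $\EE_s^w(A;N)/N^{1+s/p}\to\infty$ (similarly with $N^2\log N$ when $s=p$).

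\medskip

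\textbf{Main obstacle.} The crux is the model cube case and, within it, controlling the \emph{cross-energy} between subcubes uniformly so that the near-subadditivity is tight enough to pin down the constant rather than merely an order of magnitude — this is where the convergence of $\sum_{v\in\Z^p\setminus\{0\}}|v|^{-s}$ for $s>p$ (resp.\ its logarithmic divergence at $s=p$) does the essential work, and where a naive packing/covering estimate is not sharp enough. A secondary difficulty is the bookkeeping for CPD-weights near the (measure-zero) set of bad points: one must ensure that the allocation of points never forces a positive fraction of them into regions where $w$ is badly behaved, which is handled by a standard exhaustion of $A$ by compact subsets of $A_{\rm bi}$ on which $w$ is continuous and positive.
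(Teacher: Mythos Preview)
The paper does not contain a proof of Theorem~\ref{thenergy}. Look again at Section~\ref{S3}: the theorem is introduced with the sentence ``The following theorem, proved by Borodachov, Hardin and Saff, \cite{Borodachov2008, Borodachov2015}, describes the asymptotic behavior of $\EE^w_s(A;N)$.'' It is a result \emph{quoted} from prior work and used here as a black box (for Corollary~\ref{measurezero}, and repeatedly in Sections~\ref{S7}--\ref{S8} via \eqref{polarenergy}). There is therefore nothing in this paper for your proposal to be compared against.

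That said, your outline is a reasonable reconstruction of how the cited references actually proceed: cube model via a Fekete-type subadditivity argument exploiting the summability of $\sum_{v\in\Z^p\setminus\{0\}}|v|^{-s}$ for $s>p$ (logarithmic divergence at $s=p$), transfer to $(\H_d,d)$-rectifiable sets through local bi-Lipschitz charts with small distortion, $\ell^{s/d}$-type aggregation over a Vitali covering yielding the $\H_d^{s,w}$-denominator, and a perturbation argument for CPD-weights using near-constancy on the diagonal. One caution: in your Step~2 you invoke $\H_d(\partial A)=0$ and $\H_d(B_{\rm bi}^c)=0$, but neither hypothesis appears in the statement of Theorem~\ref{thenergy} --- the energy theorem requires only $(\H_d,d)$-rectifiability and $\mathcal{M}_d(A)=\H_d(A)$, which is strictly weaker than what is assumed for the polarization results in this paper. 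The proof in \cite{Borodachov2008} handles this more general setting by working with rectifiability directly (covering almost all of $A$ by Lipschitz images of subsets of $\R^d$) rather than through the $A_{\rm bi}$ framework of Definition~\ref{bilip}, so your chart-based reduction would need adjustment to cover the full generality claimed.
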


The following corollary of Theorem \ref {thenergy} proves a particular case of Theorem \ref{mainmain} and will be used in the proof of Theorem~\ref{lemmafromabove}.
\begin{corollary}\label{measurezero}
If $A\subset \R^p$ is a compact set with $\H_d(A)=\mathcal{M}_d(A)=0$ and $w$ is a CPD-weight on $A$ with parameter $d$, then
$$
h_{s,d}^w(A)=\lim_{N\to\infty}\frac{\PP^w_s(A; N)}{\tau_{s,d}(N)}=\infty.
$$
\end{corollary}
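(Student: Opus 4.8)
The plan is to deduce Corollary~\ref{measurezero} directly from the energy lower bound \eqref{polarenergy} together with the degenerate case of Theorem~\ref{thenergy}. First I would observe that a compact set $A$ with $\H_d(A)=\mathcal{M}_d(A)=0$ is vacuously $(\H_d,d)$-rectifiable (since $\H_d(A)=0<\infty$ and $A$ itself is a set of $\H_d$-measure zero), so Theorem~\ref{thenergy} applies with $B=A$. In the case $s>d$, that theorem gives
$$
\lim_{N\to\infty}\frac{\EE^w_s(A;N)}{N^{1+s/d}}=\frac{C_{s,d}}{[\H_d^{s,w}(A)]^{s/d}},
$$
and since $\H_d^{s,w}(A)=\int_{A}w^{-d/s}(x,x)\,\textup{d}\H_d(x)=0$ because $\H_d(A)=0$, the right-hand side is $+\infty$; hence $\EE^w_s(A;N)/N^{1+s/d}\to\infty$. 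The analogous statement holds in the case $s=d$ with $N^2\log N$ in place of $N^{1+s/d}$, \emph{provided} $A$ is a compact subset of a $d$-dimensional $C^1$-smooth manifold; if one only assumes $\H_d(A)=\mathcal{M}_d(A)=0$, the $s=d$ part of Theorem~\ref{thenergy} as stated does not directly apply, so I would either restrict the $s=d$ claim to the manifold setting or note that the $s>d$ argument is the one needed for the cited application in Theorem~\ref{lemmafromabove}.

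Next I would combine this with \eqref{polarenergy}: for $N\geqslant 2$,
$$
\frac{\PP^w_s(A;N)}{\tau_{s,d}(N)}\geqslant \frac{1}{N-1}\cdot\frac{\EE^w_s(A;N)}{\tau_{s,d}(N)}.
$$
When $s>d$ we have $\tau_{s,d}(N)=N^{s/d}$, so the right-hand side equals $\frac{N}{N-1}\cdot\frac{\EE^w_s(A;N)}{N^{1+s/d}}$, which tends to $+\infty$ by the previous paragraph; when $s=d$ (in the manifold case) we have $\tau_{d,d}(N)=N\log N$, and the right-hand side equals $\frac{N}{N-1}\cdot\frac{\EE^w_d(A;N)}{N^2\log N}\to\infty$. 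In either case $\liminf_{N\to\infty}\PP^w_s(A;N)/\tau_{s,d}(N)=+\infty$, so the limit exists (as an extended real number) and equals $+\infty$, which is exactly $h_{s,d}^w(A)=\infty$.

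There is really no serious obstacle here; the only point requiring a little care is the \emph{applicability} of Theorem~\ref{thenergy}, namely checking that a null set is $(\H_d,d)$-rectifiable and that $\mathcal{M}_d(A)=\H_d(A)$ is part of the hypothesis rather than something to be verified. A second small subtlety is the value of $\H_d^{s,w}(A)$: one must note that $w^{-d/s}(x,x)$ is $\H_d$-integrable over a null set with integral $0$ regardless of whether $w(x,x)$ is finite or infinite, since the domain of integration has measure zero. If one wishes to state the corollary for $s\geqslant d$ in full generality (not just the manifold case), the cleanest route is to invoke the last sentence of Theorem~\ref{thenergy} only in the form needed, i.e. for $s>d$, and to remark that the $s=d$ conclusion follows from the Borodachov--Bosuwan result \cite{Borodachov2014} on manifolds. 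I expect the whole argument to occupy only a few lines.
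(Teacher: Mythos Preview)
Your approach is correct and essentially identical to the paper's proof, which simply divides \eqref{polarenergy} by $\tau_{s,d}(N)$ and invokes Theorem~\ref{thenergy} in one line. Your caution about the $s=d$ case is well-placed---the paper does not address it separately either, and the corollary is only used later for $d=p$ (in Lemma~\ref{lemmafromabove}), where the final sentence of Theorem~\ref{thenergy} covers it.
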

\begin{proof}
Dividing both sides of  \eqref{polarenergy} by $\tau_{s,d}(N)$ and using Theorem~\ref {thenergy}, we obtain
$$
\u{h}^w_{s,d}(A) \geqslant \lim_{N\to \infty}\frac{\EE^w_s(A;N)}{(N-1)\tau_{s,d}(N)} = \infty. 
$$
\end{proof}

\section{Proofs}

 The remaining sections are devoted to the proof of our main result Theorem~\ref{mainmain}.  In Section~\ref{S5} we determine  the  dominant asymptotic term of  $\PP_s(A; N)$ as $N\to \infty$ for the unit cube $A=\mathcal{Q}_p$; that is, we establish that equation \eqref{cube222} holds.     In Section~\ref{S6} we prove a subadditive property of $\u{h}^w_{s,d}(\cdot)$.  In  Section~\ref{S7} we use the subadditive property together   with \eqref{cube222} to first find a   lower bound for $\u{h}^w_{s,d}(A)$  for the case  that $A$ is a compact set in $\R^p$ of positive Lebesgue measure (see Lemma~\ref{lemmafromabove}) and then to generalize this lower bound to the case that $A$  is a sufficiently regular $d$-rectifiable set (see Lemma~\ref{weightedfrombelow}) embedded in $\R^p$.  In Section~\ref{S8},  we determine the limiting distribution of an asymptotically $(s,w)$-optimal sequence of $N$-point configurations and in the final section we establish an upper bound that proves that  the limit  ${h}^w_{s,d}(A)$  exists thereby completing the proof of Theorem~\ref{mainmain}.
  
In the rest of this section we collect some preliminary results that will be useful in the following proofs.  
First, we consider some basic properties  of $P_s^w(B;\omega_N)$ in terms of its arguments $B$ and $\omega_N$. These properties are  immediate consequences of the definition of $P_s^w$ given in \eqref{a}.  
\begin{lemma} \label{basiclemma}
Let $A$ be a compact set in $\R^p$, $w$ a function on $A\times A$ taking values in $[0,\infty]$, $B$ and $\tilde B$ subsets of $A$, and   $\omega_N$ and $\tilde\omega_M$ finite configurations in $A$. 
 
\begin{enumerate}
\item If $\tilde\omega_M\subset \omega_N$ and $\tilde B\supset B$, then
$$
P_s^w(B;\omega_N)\geqslant P_s^w(\tilde B,\tilde \omega_M).
$$
\item If $B_1,\ldots, B_k$ are subsets of $B$ such that $B=\bigcup_i B_i$, then
$$
P_s^w(B;\omega_N)=  \min_i P_s^w( B_i, \omega_N).
$$
\end{enumerate}
   \end{lemma}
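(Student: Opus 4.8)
The plan is to derive both parts directly from the definitions in \eqref{wpot} and \eqref{a}, using only the nonnegativity of the summands and elementary properties of infima.

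For part (1), the key observation is that, since $w$ takes values in $[0,\infty]$ and $|y-x_j|^{-s}\geqslant 0$, every term in the sum defining $U^w_s(y;\cdot)$ in \eqref{wpot} is nonnegative. Hence deleting points can only decrease the potential: if $\tilde\omega_M\subset\omega_N$ then $U^w_s(y;\tilde\omega_M)\leqslant U^w_s(y;\omega_N)$ for every $y\in A$. Taking the infimum over $y\in B$ and then using that $\tilde B\supset B$ makes the infimum over the larger set no larger, I would chain the inequalities
$$
P^w_s(B;\omega_N)=\inf_{y\in B}U^w_s(y;\omega_N)\geqslant \inf_{y\in B}U^w_s(y;\tilde\omega_M)\geqslant \inf_{y\in\tilde B}U^w_s(y;\tilde\omega_M)=P^w_s(\tilde B;\tilde\omega_M),
$$
which is exactly the claim. (If $M=0$ one interprets $U^w_s(y;\emptyset)=0$ and the statement still holds, but this degenerate case is not needed.)

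For part (2), I would simply invoke the fact that the infimum of a function over a finite union of sets equals the minimum of its infima over the individual sets: for any real-valued (or $[-\infty,\infty]$-valued) function $f$ and $B=\bigcup_{i=1}^k B_i$ one has $\inf_{y\in B}f(y)=\min_{1\leqslant i\leqslant k}\inf_{y\in B_i}f(y)$. Applying this with $f(y)=U^w_s(y;\omega_N)$ gives
$$
P^w_s(B;\omega_N)=\inf_{y\in B}U^w_s(y;\omega_N)=\min_{1\leqslant i\leqslant k}\inf_{y\in B_i}U^w_s(y;\omega_N)=\min_{1\leqslant i\leqslant k}P^w_s(B_i;\omega_N).
$$

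There is no real obstacle here; the only point requiring a word of care is the nonnegativity of the kernel, which is what makes monotonicity in $\omega_N$ go the "right" way in part (1)—this is precisely the feature that polarization shares with, and energy lacks relative to, the set-monotonicity discussed in Section~\ref{S1}. Everything else is bookkeeping with infima over finitely many sets.
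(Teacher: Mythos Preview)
Your proof is correct and follows exactly the approach the paper indicates: the paper does not give a detailed argument but simply states that these properties ``are immediate consequences of the definition of $P_s^w$ given in \eqref{a},'' and your write-up spells out precisely those immediate consequences.
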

   
In several of our later proofs we shall need the   existence of a sufficiently regular `Vitali-type' covering for subsets of $A_{\rm bi}$.  

 \begin{lemma}\label{vitali}
Let $A\subset \R^p$ be a compact set with $\H_d(A)<\infty$, and $B\subset A\setminus \clos(A_{\rm bi}^c)$  a nonempty set open relative to $A$.  For $\epsilon>0$, there exists a pairwise disjoint  collection $\mathcal{X}_{\ep}=\{Q_{\alpha}\}$ of closed sets $Q_\alpha:=B[x_\alpha,\rho_\alpha]\cap A$  such that 
\begin{equation}
\label{BQalpha}\displaystyle \H_d\left(B\setminus \mathsmaller{\bigcup}_{Q_\alpha\in \mathcal{X}_{\ep}} Q_{\alpha}\right)=0, 
\end{equation} and 
such that  for each $\alpha$, we have  $\rho_\alpha<  \epsilon$ and that there is some bi-Lipschitz   $\vf_\alpha$ with   constant $(1+\epsilon)$ mapping $Q_\alpha$ onto 
$\tilde Q_\alpha:=\vf_\alpha(Q_\alpha)$ such that $\mathcal{L}_d(\partial {\tilde Q}_\alpha)=0$ and 
\begin{equation}
\label{Qalphatilde}
\tilde Q_\alpha \supset B[\vf_\alpha(x_\alpha),\rho_\alpha/(1+\epsilon)].
\end{equation}
If $\epsilon>0$ and $\gamma>0$ then there is some finite collection 
$\mathcal{X}_{\epsilon,\gamma}\subset \mathcal{X_\epsilon}$ such that
\begin{equation}
\label{BQalpha2}\displaystyle \H_d\left(B\setminus \mathsmaller{\bigcup}_{Q_\alpha\in \mathcal{X}_{\ep,\gamma}} Q_{\alpha}\right)<\gamma.
\end{equation}
\end{lemma}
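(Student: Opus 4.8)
The plan is to construct $\mathcal{X}_\epsilon$ by a Vitali covering argument applied on the image side, where the geometry is Euclidean. First I would fix $\epsilon>0$ and, since $B\subset A\setminus\clos(A_{\rm bi}^c)$ is open relative to $A$, observe that every point $x\in B$ lies in $A_{\rm bi}$ and has a neighborhood $B(x,\delta_x)\cap A$ on which the bi-Lipschitz map $\vf_{x,\epsilon/2}$ (say) with constant $(1+\epsilon/2)$ exists with open image in $\R^d$; by shrinking $\delta_x$ I may also assume $B(x,\delta_x)\cap A\subset B$ and $\delta_x<\epsilon$. For each such $x$ and each sufficiently small $\rho$, the set $Q:=B[x,\rho]\cap A$ satisfies $\vf_{x,\epsilon/2}(Q)\supset B[\vf_{x,\epsilon/2}(x),\rho/(1+\epsilon/2)]\supset B[\vf_{x,\epsilon/2}(x),\rho/(1+\epsilon)]$ by the bi-Lipschitz inequalities, and its image is contained in a Euclidean ball of radius $(1+\epsilon/2)\rho$ in the open set $\vf_{x,\epsilon/2}(B(x,\delta_x)\cap A)\subset\R^d$.

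Next I would arrange that the image $\tilde Q_\alpha$ has $\mathcal{L}_d$-negligible boundary. The key point is that for fixed $x$, the sets $\vf_{x,\epsilon/2}(B[x,\rho]\cap A)$ for distinct $\rho$ have essentially disjoint boundaries in $\R^d$: the boundary of such a set is contained in $\vf_{x,\epsilon/2}(\{y\in A: |y-x|=\rho\})$ together with possibly part of $\partial(\vf_{x,\epsilon/2}(B(x,\delta_x)\cap A))$ — but since the image set is open, the latter contributes nothing for $\rho$ small, and the former sets are pairwise disjoint over $\rho$. Hence for all but countably many $\rho\in(0,\rho_0)$ we have $\mathcal{L}_d(\partial\tilde Q_\alpha)=0$; so the admissible radii $\rho$ (with the containment \eqref{Qalphatilde}, with $\rho<\epsilon$, with $\mathcal{L}_d(\partial\tilde Q_\alpha)=0$) form a set accumulating at $0$ for each center $x\in B$. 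This makes $\{B[x,\rho]\cap A: x\in B,\ \rho\text{ admissible}\}$ a Vitali (fine) cover of $B$ in the metric space $(A,|\cdot|)$.

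Then I would invoke the Vitali covering theorem for the measure $\H_d$ restricted to $B$ — valid since $\H_d\lfloor A$ is a finite Borel measure and, by the bi-Lipschitz structure at each point of $B$, is locally comparable to $\L_d$ pushed back from $\R^d$, so it is a Radon measure satisfying the Vitali property for closed balls. This yields a countable pairwise disjoint subcollection $\mathcal{X}_\epsilon=\{Q_\alpha=B[x_\alpha,\rho_\alpha]\cap A\}$ with $\H_d(B\setminus\bigcup_\alpha Q_\alpha)=0$, giving \eqref{BQalpha}; each $Q_\alpha$ carries the bi-Lipschitz $\vf_\alpha:=\vf_{x_\alpha,\epsilon/2}|_{Q_\alpha}$ (constant $\le 1+\epsilon$), with $\mathcal{L}_d(\partial\tilde Q_\alpha)=0$ and \eqref{Qalphatilde} as arranged. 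Finally, \eqref{BQalpha2} is immediate: since $\sum_\alpha\H_d(Q_\alpha)\le\H_d(A)<\infty$, the tail $\H_d(\bigcup_{\alpha\notin F}Q_\alpha)\to 0$ as the finite set $F$ exhausts the index set, so picking $F$ with this tail $<\gamma$ and using \eqref{BQalpha} gives $\mathcal{X}_{\epsilon,\gamma}:=\{Q_\alpha\}_{\alpha\in F}$ as required.

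The main obstacle I anticipate is the verification that the relevant Vitali-type covering theorem actually applies in the set $A$ with its ambient Euclidean metric (closed balls intersected with $A$), rather than in $\R^d$ where it is standard; this requires noting that on $B$ the measure $\H_d\lfloor A$ is, locally near each point, the bi-Lipschitz pullback of a measure comparable to Lebesgue measure on an open subset of $\R^d$, hence enjoys the asymptotic doubling / density properties needed for Vitali, and then transferring the conclusion back. A secondary technical point is making the "all but countably many radii have null-boundary image" claim precise while simultaneously keeping the containment \eqref{Qalphatilde}; both constraints cut out only a measure-zero (indeed at most countable, plus a threshold) set of radii, so a fine cover still survives, but this bookkeeping should be done carefully.
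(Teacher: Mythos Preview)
Your proposal is correct and follows essentially the same route as the paper: build a Vitali fine cover of $B$ by sets $B[x,\rho]\cap A$ with the bi-Lipschitz charts $\vf_{x,\epsilon}$ attached, apply Vitali's covering theorem for Radon measures (the paper cites \cite[Theorem 2.8]{Mattila1995}), and extract a finite subcollection by finiteness of $\H_d(B)$.

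Two minor remarks. First, your worry about whether Vitali applies in $(A,|\cdot|)$ is unnecessary: regard $\H_d{\raisemath{-2pt}{\big |}}_A$ as a finite (hence Radon) measure on $\R^p$ and apply the standard Vitali theorem there with genuine closed balls $B[x,\rho]\subset\R^p$; disjointness of the balls is stronger than disjointness of the $Q_\alpha$, and the measure ignores the complement of $A$. No doubling or density argument is needed. Second, you are actually more careful than the paper on the condition $\mathcal{L}_d(\partial\tilde Q_\alpha)=0$: the paper's proof does not address it explicitly, while your observation that the sets $\vf_{x,\epsilon}(\{y\in A:|y-x|=\rho\})$ are pairwise disjoint over $\rho$ (and contain $\partial\tilde Q_\alpha$, since $\vf_{x,\epsilon}(B(x,\rho)\cap A)$ is open and $\tilde Q_\alpha$ is compact inside $U_x$) gives the clean ``all but countably many $\rho$'' argument that makes the fine cover survive.
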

\begin{proof}
 
Let  $\epsilon>0$.  Since $B\subset A_{\rm bi}$ and $B$ is relatively open,    then for each $x\in B$ Definition~\ref{bilip}  implies that there is a number $\delta=\delta(x, \ep)>0$ and a bi-Lipschitz function $\vf_{x, \epsilon}\colon B(x,\delta)\cap B\to \R^d$ with   constant   $1+\ep$, such that $U_x:=\vf_{x, \epsilon}(B(x,\delta)\cap B)$ is  an  open set in $\R^d$.
Thus,  there exists some $r=r(x)>0$  so that $B(\vf_{x,\epsilon}(x), r)\subset U_x$ and, hence, using the fact that $\vf_{x, \epsilon}$ has bi-Lipschitz constant $(1+\epsilon)$, we have for $0<\rho<r(x)/(1+\epsilon)$ that 
 $Q_{x,\rho}:=B[x,\rho]\cap B\subset \vf^{-1}_{x, \epsilon}(B(\vf_{x,\epsilon}(x),  r))$ and so $\vf_{x, \epsilon}(Q_{x,\rho})\supset B[\vf_{x,\epsilon}(x),  \rho/(1+\epsilon)]$. 
 Let $$
V_{\ep}(B):=\left \{Q_{x, \rho}\colon 0<\rho\leqslant \min\left\{ {r(x)}/{(1+\ep)}, \ep\right\}, \; \; x\in B\right\}.
$$

 Then by Vitali's covering theorem for Radon measures (see, for example, \cite[Theorem 2.8]{Mattila1995}), there is a pairwise disjoint collection $\{Q_\alpha\}\subset V_{\ep}(B)$ such that
 \eqref{BQalpha} holds.     By construction each $Q_\alpha$ is of the form $B[x_\alpha,\rho_\alpha]\cap B$ and $\vf_\alpha:=\vf_{x_\alpha,\epsilon}{\raisemath{-2pt}{\big |}}_{Q_\alpha}$ is bi-Lipschitz with constant $(1+\epsilon)$ and such that \eqref{Qalphatilde} holds. 
 
For $\gamma>0$, the existence of such a finite collection $\mathcal{X}_{\epsilon,\gamma}$ satisfying \eqref{BQalpha2} follows from the fact that the elements of $X_\epsilon$ are pairwise disjoint and  that $\H_d(B)<\infty$. 
\end{proof}

\section{Proof of equality \eqref{cube222}}\label {S5}

In this section we prove that the limit $h_{s,p}(\mathcal{Q}_p)$ exists for any $s>p$ and that $\sigma_{s,p}=h_{s,p}(\mathcal{Q}_p)$ is a positive finite number. For the case $s=p$, this fact was proved by Borodachov and Bosuwan \cite{Borodachov2014} using a different method. Our proof for $s>p$ utilizes an argument similar to the one   in \cite{Hardin2005}.

\bigskip

For $N\in \NN$, let  $\omega_N  $ be an $s$-polarization optimal $N$-point configuration for $ \mathcal{Q}_p$; that is,
$  P_s(\mathcal{Q}_p; \omega_N)=\PP_s(\mathcal{Q}_p; N).$
For    $m\geqslant2$, $m\in \NN$,  and a vector ${\bf j} = (j_1, j_2, \ldots, j_p)\in \mathbb{Z}^p$ with $0\leqslant j_k \leqslant m-1$, define
$$
Q^{{\bf j}}:=\left[\frac{j_1}m, \frac{j_1+1}m\right] \times \cdots \times \left[\frac{j_p}m, \frac{j_p+1}m\right] = \frac{1}m(\mathcal{Q}_p + {\bf j}),
$$
$$
\omega_N^{{\bf j}}:= \frac{1}m (\omega_N+ {\bf j})\subset Q^{{\bf j}},
$$
and $\overline\omega_{m^p N}:=\bigcup_{{\bf j}} \omega_N^{{\bf j}}\subset \mathcal{Q}_p$.
Then, using Lemma~\ref{basiclemma}, we obtain
\begin{multline}\label{QmpN1}
\PP_s(\mathcal{Q}_p; m^p N) \geqslant P_s(\mathcal{Q}_p; \overline\omega_{m^p N}) = \min_{{\bf j}} P_s(Q^{{\bf j}}; \overline\omega_{m^p N})   \geqslant  \min_{{\bf j}} P_s(Q^{{\bf j}};\omega_{N}^{{\bf j}})=P_s(Q^{{\bf 0}}; \omega_N^{{\bf 0}}),
\end{multline}
where the last equality follows from the observation that 
$P_s(Q^{{\bf j}}; \omega_N^{{\bf j}})=P_s(Q^{{\bf 0}}; \omega_N^{\bf 0})$   since $Q^{{\bf j}}$ and  $\omega_N^{{\bf j}}$ are translations by ${\bf j}/m$ of $Q^{{\bf 0}}$ and  $\omega_N^{{\bf 0}}$, respectively.  Furthermore,  the scaling relations $ Q^{{\bf 0}}=(1/m)\mathcal{Q}_p$ and $  \omega_N^{\bf 0}=(1/m)\omega_N$   together with  \eqref{QmpN1} imply 
\begin{equation}\label{QmpN1a}
\PP_s(\mathcal{Q}_p; m^p N) \geqslant     P_s(Q^{{\bf 0}}; \omega_N^{{\bf 0}})=m^s P_s(\mathcal{Q}_p; \omega_N) \geqslant m^s \PP_s(\mathcal{Q}_p; N) .
\end{equation}

From inequality \eqref {upper} we have $\overline h_{s,p}(\mathcal {Q}_p)<\infty$.
Let $\epsilon>0$ and let  $N_0$ be a positive integer such that
$$
\frac{\PP_s(\mathcal{Q}_p; N_0)}{N_0^{s/p}} > \o{h}_{s,p}(\mathcal{Q}_p)-\ep.
$$
For   $N>N_0$ choose the non-negative integer $m_N$ such that $m_N^p N_0 \leqslant N < (m_N +1)^p N_0$.
Then, from \eqref{QmpN1a} we get
$$
\o{h}_{s,p}(\mathcal{Q}_p) < \frac{\PP_s(\mathcal{Q}_p; N_0)}{N_0^{s/p}} + \ep = \frac{m_N^s\PP_s(\mathcal{Q}_p; N_0)}{m_N^s N_0^{s/p}} + \ep \leqslant \frac{\PP_s(\mathcal{Q}_p; m_N^p N_0)}{m_N^s N_0^{s/p}} + \ep.
$$
Notice that the inequality $m_N^p N_0 \leqslant N$ implies $\PP_s(\mathcal{Q}_p; m_N^p N_0) \leqslant \PP_s(\mathcal{Q}_p; N)$. Therefore,
\begin{equation} \label{QmpN2}
\o{h}_{s,p}(\mathcal{Q}_p) < \frac{\PP_s(\mathcal{Q}_p; N)}{N^{s/p}} \cdot \left(\frac{m_N+1}{m_N}\right)^s + \ep.
\end{equation}
Taking the limit inferior as $N\to\infty$ in \eqref{QmpN2} and noting that $m_N\to \infty$ as $N\to \infty$, we obtain
\begin{equation}\label{PsQplower}
\o{h}_{s,p}(\mathcal{Q}_p) \leqslant \u{h}_{s,p}(\mathcal{Q}_p)+\ep.
\end{equation}
In view of the arbitrariness of $\ep$, the limit $\sigma_{s,p}:=h_{s,p}(\mathcal{Q}_p)$ exists as a finite real number. Inequality \eqref{polarenergy} together with Theorem \ref{thenergy}  imply that $ \sigma_{s,p}$ is positive.
\hfill $\Box$

\medskip

One may alternatively prove the positivity of $\sigma_{s,p}$ directly without using Theorem~\ref {thenergy}.  One method consists of   dividing the cube $\mathcal Q_p$ into $N=n^p$ equal subcubes and letting $\omega_N$ be the configuration consisting of the centers of these cubes, then it is not difficult to prove that $P_s(\mathcal Q_p;\omega_N)$ will have order $N^{s/d}$ as $N\to\infty$.

\section{Sub-additivity of $[\u{h}_{s,d}^w(\cdot)]^{-d/s}$}\label {S6}

The following lemma establishes the sub-additivity of $[\u{h}_{s,d}^w(\cdot)]^{-d/s}$ and  will play an important role in the proof of   \eqref {h_sp}, see Lemmas \ref{lemmafromabove} and \ref{weightedfrombelow}.
\begin{lemma}\label{addit}
Suppose $B$ and $C$ are subsets of $\R^p$ and $w\colon (B\cup C)\times (B\cup C) \to [0,\infty]$. Then for any positive $d\leqslant p$ and any $s\geqslant d$, 
\begin{equation}\label {addit'}
\u{h}^w_{s,d}(B\cup C)^{-d/s}\leqslant \u{h}^w_{s,d}(B)^{-d/s}+\u{h}^w_{s,d}(C)^{-d/s}.
\end{equation}
\end{lemma}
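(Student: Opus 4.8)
The plan is to produce a good configuration on $B\cup C$ by superimposing good configurations on $B$ and on $C$, allocating the available points between the two pieces in the proportion that balances their polarization contributions.

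First I would fix a parameter $t\in(0,1)$ and, for each large $N$, set $N_B:=\lceil tN\rceil$ and $N_C:=N-N_B$, so that $N_B/N\to t$, $N_C/N\to 1-t$, and $N_B,N_C\to\infty$. Let $\omega^B$ and $\omega^C$ be $(s,w)$-polarization configurations of cardinalities $N_B$ and $N_C$ for $B$ and $C$ respectively, each chosen within $\epsilon$ of optimal (if no optimal configuration exists), and put $\omega_N:=\omega^B\cup\omega^C$, an $N$-point multiset in $B\cup C$. By part (2) of Lemma~\ref{basiclemma}, $P^w_s(B\cup C;\omega_N)=\min\{P^w_s(B;\omega_N),P^w_s(C;\omega_N)\}$, and by part (1), since $\omega_N\supseteq\omega^B$ and $\omega_N\supseteq\omega^C$, we get $P^w_s(B;\omega_N)\geqslant P^w_s(B;\omega^B)$ and $P^w_s(C;\omega_N)\geqslant P^w_s(C;\omega^C)$. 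Combining these and letting $\epsilon\to 0$ at the end yields
$$
\PP^w_s(B\cup C;N)\ \geqslant\ \min\{\,\PP^w_s(B;N_B),\ \PP^w_s(C;N_C)\,\}.
$$

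Next I would divide by $\tau_{s,d}(N)$ and write each term on the right as $\dfrac{\PP^w_s(B;N_B)}{\tau_{s,d}(N_B)}\cdot\dfrac{\tau_{s,d}(N_B)}{\tau_{s,d}(N)}$, and similarly for $C$. Directly from the definition of $\tau_{s,d}$ one checks that $\tau_{s,d}(N_B)/\tau_{s,d}(N)\to t^{s/d}$ as $N\to\infty$ (immediate for $s>d$; for $s=d$ it reduces to $\log\lceil tN\rceil/\log N\to 1$, and $t^{s/d}=t$ in that case), and likewise $\tau_{s,d}(N_C)/\tau_{s,d}(N)\to(1-t)^{s/d}$. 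Taking $\liminf_{N\to\infty}$, and using that the $\liminf$ of a minimum is at least the minimum of the $\liminf$'s, that a factor converging to a positive constant factors out of the $\liminf$, and that the $\liminf$ along the index sequence $\{N_B(N)\}$ (resp.\ $\{N_C(N)\}$) is at least the full $\liminf$ defining $\u{h}^w_{s,d}(B)$ (resp.\ $\u{h}^w_{s,d}(C)$), I would obtain
$$
\u{h}^w_{s,d}(B\cup C)\ \geqslant\ \min\big\{\,t^{s/d}\,\u{h}^w_{s,d}(B),\ (1-t)^{s/d}\,\u{h}^w_{s,d}(C)\,\big\}\qquad\text{for every }t\in(0,1).
$$

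Finally I would optimize over $t$. Write $a:=\u{h}^w_{s,d}(B)$ and $b:=\u{h}^w_{s,d}(C)$. If $a=0$ or $b=0$, the right-hand side of \eqref{addit'} equals $+\infty$ and there is nothing to prove, so assume $a,b\in(0,\infty]$. When $a,b<\infty$, the choice $t^\ast=b^{d/s}/(a^{d/s}+b^{d/s})$ equalizes the two entries of the minimum, yielding $\u{h}^w_{s,d}(B\cup C)\geqslant ab/(a^{d/s}+b^{d/s})^{s/d}$; applying the decreasing function $x\mapsto x^{-d/s}$ converts this into exactly $\u{h}^w_{s,d}(B\cup C)^{-d/s}\leqslant a^{-d/s}+b^{-d/s}$. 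The remaining cases, where $a=\infty$ and/or $b=\infty$, follow by letting $t\to0^+$ or $t\to1^-$ in the displayed bound, so that the corresponding term $a^{-d/s}$ or $b^{-d/s}$ vanishes. I do not expect a genuine obstacle here: the only real care lies in the $\liminf$ bookkeeping of the second step and in the degenerate values of $a$ and $b$, while the substance is simply the superposition inequality together with the elementary maximization of $\min\{t^{s/d}a,(1-t)^{s/d}b\}$.
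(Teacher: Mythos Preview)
Your proof is correct and follows essentially the same approach as the paper: both establish the key inequality $\PP^w_s(B\cup C;N)\geqslant\min\{\PP^w_s(B;N_1),\PP^w_s(C;N_2)\}$ for $N_1+N_2=N$ (you by superimposing near-optimal configurations, the paper by restricting the supremum), then divide by $\tau_{s,d}(N)$, pass to the $\liminf$, and optimize over the split parameter, handling the degenerate cases $\u{h}^w_{s,d}=0$ or $\infty$ separately. Your writeup is slightly more explicit about the $\liminf$ bookkeeping, but the argument is the same.
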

\begin{proof}
First note that for any $N$-point configuration $\omega_N\subset B\cup C$, Lemma~\ref{basiclemma} gives
\begin{equation}
P_s^w(B\cup C; \omega_N)=\min\left\{P_s^w(B,\omega_N),P_s^w(C,\omega_N)\right\}\geqslant \min\left\{ P_s^w(B; \omega_N\cap B),P_s^w(  C; \omega_N\cap C)\right\}.
\end{equation}
If  $N_1$, $N_2$, and $N$ are positive integers such that $N_1+N_2=N$  then,  with $\omega_N$ denoting an arbitrary $N$ point configuration in $B\cup C$, 
we have
\begin{equation}\label{koaks}
\begin{split}
\PP^w_s(B \cup C; N)&=\sup_{\omega_N}\left( P_s^w(B\cup C; \omega_N) \right)\\ &\geqslant
\sup_{\substack{
\#\omega_N\cap B\geqslant N_1\\  
\#\omega_N\cap C\geqslant N_2}}\min\left( P_s^w(B; \omega_N\cap B),P_s^w(  C; \omega_N\cap C)\right)\\&
\geqslant \min \left\{\PP^w_s(B; N_1), \PP^w_s(C; N_2)\right\},
\end{split}
\end{equation}

 We now assign particular values to $N_1$ and $N_2$. For  a fixed   $\alpha\in (0,1)$ and $N\in \NN$, let  $N_1:=\lfloor \alpha N\rfloor$ and $N_2:=N-N_1$  and note that $N_1\to \infty$ and $N_2\to \infty$ as $N\to\infty$. Then the inequality  \eqref{koaks} implies
$$
\frac{\PP^w_s(B\cup C; N)}{\tau_{s,d}(N)} \geqslant \min \left\{\frac{\PP^w_s(B; N_1)}{\tau_{s,d}(N_1)}\cdot \frac{\tau_{s,d}(N_1)}{\tau_{s,d}(N)}, \frac{\PP^w_s(C; N_2)}{\tau_{s,d}(N_2)}\cdot \frac{\tau_{s,d}(N_2)}{\tau_{s,d}(N)} \right\},
$$
which, together with
$$
\lim_{N\to \infty} \frac{\tau_{s,d}(N_1)}{\tau_{s,d}(N)} = \alpha^{s/d}, \;\;\;\; \lim_{N\to \infty} \frac{\tau_{s,d}(N_2)}{\tau_{s,d}(N)}=(1-\alpha)^{s/d},\ \ \ s\geqslant d,
$$
yields
\begin{equation}\label{kvaks}
\u{h}_{s,d}^w(B\cup C) \geqslant \min \left\{\alpha^{s/d}\u{h}_{s,d}^w(B), (1-\alpha)^{s/d} \u{h}_{s,d}^w(C) \right\} \; \; \; \mbox{for any $\alpha\in(0,1)$}.
\end{equation}
If $\u{h}_{s,d}^w(B)=0$ or $\u{h}_{s,d}^w(C)=0$ then \eqref{addit'} holds trivially and so  we assume  both $\u{h}_{s,d}^w(B)$ and $\u{h}_{s,d}^w(C)$ are positive.
If $\u{h}_{s,d}^w(B)=\u{h}_{s,d}^w(C)=\infty$ then the right-hand side of \eqref {kvaks} is equal to $\infty$, and the lemma holds trivially. If $\u{h}_{s,d}^w(B)<\infty$ and $\u{h}_{s,d}^w(C)=\infty$ then the right-hand side of \eqref {kvaks} is equal to $\alpha^{s/d}\u{h}_{s,d}^w(B)$. Letting $\alpha$ go to $1$ we obtain the lemma. The case $\underline h_{s,d}^w(B)=\infty$ and $\underline h_{s,d}^w(C)<\infty$ is treated similarly.

If both $\u{h}_{s,d}^w(B)$ and $\u{h}_{s,d}^w(C)$ are positive and finite, then we set
$$
\alpha:=\frac{\u{h}_{s,d}^w(C)^{d/s}}{\u{h}_{s,d}^w(B)^{d/s}+\u{h}_{s,d}^w(C)^{d/s}} \in (0,1).
$$
This choice of $\alpha$ together with inequality \eqref{kvaks} implies the estimate \eqref{addit'}.
\end{proof}

\section{An estimate of $\u{h}^w_{s,d}(A)$ from below}\label {S7}

In this section we prove important corollaries of Lemma \ref{addit}. We start with the unweighted case (i.e., $w=1$) for $d=p$.
\begin{lemma}\label{lemmafromabove}
Suppose $A\subset \R^p$ is a compact set with $\L_p(\partial A)=0$. Then for any $s\geqslant p$,
\begin{equation}\label{frombelownoweight}
\u{h}_{s,p}(A)\geqslant \frac{\sigma_{s,p}}{\L_p(A)^{s/p}}.
\end{equation}
\end{lemma}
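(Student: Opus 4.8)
The strategy is to approximate $A$ from inside by a finite union of small cubes and apply the sub-additivity estimate of Lemma~\ref{addit} together with the cube asymptotics \eqref{cube222}. Fix $s\geqslant p$. For a small grid parameter, decompose $\R^p$ into a mesh of closed cubes of side length $\ell$ and let $\{Q_i\}_{i=1}^{M}$ be those cubes of the mesh that are entirely contained in $A$; write $A_\ell:=\bigcup_{i=1}^{M}Q_i\subset A$. Since $\L_p(\partial A)=0$, we have $\L_p(A_\ell)\to \L_p(A)$ as $\ell\to 0^+$; indeed the cubes meeting $\partial A$ have total volume tending to $0$. By part~(1) of Lemma~\ref{basiclemma} (monotonicity of $P_s^w$ in the set argument), $\PP_s(A;N)\geqslant \PP_s(A_\ell;N)$ for all $N$, hence $\u{h}_{s,p}(A)\geqslant \u{h}_{s,p}(A_\ell)$.

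Next I would bound $\u{h}_{s,p}(A_\ell)$ from below. Applying Lemma~\ref{addit} inductively to the finite union $A_\ell=\bigcup_{i=1}^M Q_i$ gives
\begin{equation}
\u{h}_{s,p}(A_\ell)^{-p/s}\leqslant \sum_{i=1}^{M}\u{h}_{s,p}(Q_i)^{-p/s}.
\end{equation}
Each $Q_i$ is a translate of the cube $\ell\,\mathcal{Q}_p$, and polarization scales as $\PP_s(\ell\,\mathcal{Q}_p;N)=\ell^{-s}\PP_s(\mathcal{Q}_p;N)$ (the same scaling relation used in \eqref{QmpN1a}), so by \eqref{cube222},
\begin{equation}
\u{h}_{s,p}(Q_i)=h_{s,p}(Q_i)=\ell^{-s}\sigma_{s,p}=\frac{\sigma_{s,p}}{\L_p(Q_i)^{s/p}},
\end{equation}
since $\L_p(Q_i)=\ell^p$. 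Therefore
\begin{equation}
\u{h}_{s,p}(A_\ell)^{-p/s}\leqslant \sum_{i=1}^M \frac{\L_p(Q_i)}{\sigma_{s,p}^{p/s}}=\frac{\L_p(A_\ell)}{\sigma_{s,p}^{p/s}},
\end{equation}
that is, $\u{h}_{s,p}(A_\ell)\geqslant \sigma_{s,p}/\L_p(A_\ell)^{s/p}$. Combining with the monotonicity step,
\begin{equation}
\u{h}_{s,p}(A)\geqslant \frac{\sigma_{s,p}}{\L_p(A_\ell)^{s/p}}.
\end{equation}

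Finally, let $\ell\to 0^+$. Since $\L_p(A_\ell)\to \L_p(A)$, the right-hand side converges to $\sigma_{s,p}/\L_p(A)^{s/p}$, and \eqref{frombelownoweight} follows. (If $\L_p(A)=0$ the inequality is vacuous, or one invokes Corollary~\ref{measurezero}; if $\L_p(A)>0$ then $A_\ell$ is nonempty for $\ell$ small, so the estimates above are meaningful.) The only point requiring a little care is the approximation $\L_p(A_\ell)\to\L_p(A)$: a point in the interior of $A$ lies in some cube $Q_i$ once $\ell$ is small enough, so $\bigcup_\ell A_\ell$ exhausts the interior of $A$, which has full measure because $\L_p(\partial A)=0$; monotone convergence then gives the claim. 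I do not anticipate a genuine obstacle here—this lemma is essentially the ``easy direction'' bootstrapped from the cube case, with Lemma~\ref{addit} doing the combinatorial work.
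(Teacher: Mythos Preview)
Your argument contains a genuine gap: the monotonicity step $\PP_s(A;N)\geqslant \PP_s(A_\ell;N)$ is false in general. Lemma~\ref{basiclemma}(i) says that $P_s(B;\omega_N)$ is \emph{decreasing} in the set argument $B$, so for a fixed configuration $\omega_N\subset A_\ell\subset A$ one has $P_s(A_\ell;\omega_N)\geqslant P_s(A;\omega_N)$, which is the wrong direction. Taking the supremum over configurations does not repair this: the supremum defining $\PP_s(A;N)$ ranges over a larger set of configurations, but each configuration is evaluated via an infimum over the larger set $A$, and these two effects do not compare. Concretely, with $A=[0,1]$, $A_\ell=[0,1/2]$, $N=1$, one has $\PP_s(A;1)=2^s<4^s=\PP_s(A_\ell;1)$. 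The paper itself flags this in Section~\ref{S1}: polarization lacks the simple set-monotonicity that minimal energy enjoys.

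The fix is exactly what the paper's proof does: you cannot discard the remainder $D:=A\setminus\bigcup_i Q_i$, you must include it in the subadditive decomposition. Write $A=\clos(D)\cup\bigcup_i Q_i$ and apply Lemma~\ref{addit} to get
\[
\u{h}_{s,p}(A)^{-p/s}\leqslant \sum_i \u{h}_{s,p}(Q_i)^{-p/s}+\u{h}_{s,p}(\clos(D))^{-p/s}.
\]
The cube terms are handled as you did. For the remainder, the hypothesis $\L_p(\partial A)=0$ gives $\L_p(\clos(D))=\L_p(D)<\ep$, and then the energy comparison \eqref{polarenergy} together with Theorem~\ref{thenergy} forces $\u{h}_{s,p}(\clos(D))\geqslant C_{s,p}\,\ep^{-s/p}$, so its contribution vanishes as $\ep\to 0$. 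Without this control on $D$ the argument does not close.
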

\begin{proof}
If $\L_p(A)=0$ then the lemma follows from Corollary \ref{measurezero}. Thus, we assume $\L_p(A)>0$.

Let $\ep>0$. Our assumptions on the set $A$ imply that there exists a finite family $\mathcal{D}=\{Q_i\}$ of closed cubes with disjoint interiors, such that $Q_i\subset A$ and
$$
\L_p\left(A\setminus  \mathsmaller{\bigcup}_{i}Q_i\right) < \ep.
$$
Denote $D:=A\setminus \cup_{i}Q_i$. Since $\L_p(\partial A)=0$, we also get $\L_p(\partial D)=0$. Thus, $\L_p(\textup{clos}(D))=\L_p(D)<\ep$. From inequality \eqref{polarenergy} and Theorem \ref{thenergy} we obtain
$$
\u{h}_{s,p}(\textup{clos}(D))\geqslant \lim_{N\to \infty}\frac{\EE_s(\textup{clos}(D);N)}{(N-1)\tau_{s,p}(N)} \geqslant C_{s,p}\ep^{-s/p}.
$$
Further, inequality \eqref{addit'} yields
$$
\u{h}_{s,p}(A)^{-p/s} \leqslant \sli_i \u{h}_{s,p}(Q_i)^{-p/s} + \u{h}_{s,p}(\textup{clos} (D))^{-p/s} \leqslant  \sli_i\u{h}_{s,p}(Q_i)^{-p/s} + C_{s,p}^{-p/s}\ep.
$$

Equality \eqref{cube222} implies that $\u{h}_{s,p}(Q_i) = \sigma_{s,p}\L_p(Q_i)^{-s/p}$. Thus,
\begin {equation}\label{ineq222}
\begin {split}
\u{h}_{s,p}(A)^{-p/s} &\leqslant \sli_i \sigma_{s,p}^{-p/s} \L_p(Q_i) + C_{s,p}^{-p/s}\ep \\
&= \sigma_{s,p}^{-p/s} \L_p\left( \mathsmaller{\bigcup}_i Q_i\right) + C_{s,p}^{-p/s}\ep \leqslant \sigma_{s,p}^{-p/s} \L_p(A) + C_{s,p}^{-p/s}\ep.
\end {split}
\end {equation}
Taking $\ep\to 0$ in \eqref{ineq222} then gives \eqref{frombelownoweight}.
\end{proof}
Next, we deduce a general estimate for $\u{h}^w_{s,d}$. Namely, we prove the following lemma.
\begin{lemma}\label{weightedfrombelow}
Suppose $d,p\in \NN$, $d\leqslant p$, $A\subset \R^p$ is a compact set with $\H_d(A)=\mathcal{M}_d(A)<\infty$ and $\H_d(\clos(A_{\rm bi}^c))=0$. Suppose $w$ is a CPD weight on $A\times A$ with parameter $d$. Then for any $s\geqslant d$, 
\begin{equation}\label{frombeloww}
\u{h}_{s,d}^w(A)\geqslant \frac{\sigma_{s,d}}{\H^{s,w}_d(A)^{s/d}}.
\end{equation}
\end{lemma}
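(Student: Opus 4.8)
The plan is to imitate the proof of Lemma~\ref{lemmafromabove}, replacing its partition of a cube in $\R^p$ by a bi-Lipschitz Vitali-type partition of $A$ furnished by Lemma~\ref{vitali}, and freezing the weight at its diagonal value on each small piece. If $\H_d(A)=0$, then $\mathcal{M}_d(A)=0$ as well, so Corollary~\ref{measurezero} gives $\u{h}^w_{s,d}(A)=\infty$ and \eqref{frombeloww} is immediate; hence I may assume $\H_d(A)>0$, so that $B:=A\setminus\clos(A_{\rm bi}^c)$ is a nonempty relatively open subset of $A$ with $\H_d(A\setminus B)=0$. By the defining properties of a CPD-weight (Definition~\ref{CPDweight}) there are a constant $c_0>0$ and a relative neighbourhood $G$ of the diagonal $D(A)$ with $w\geqslant c_0$ on $G$ (so $w(x,x)\geqslant c_0$ for all $x\in A$), and $w$ is continuous at $\H_d$-almost every point of $D(A)$. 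Fix $\ep\in(0,c_0/2)$ and $\gamma>0$. I would run the construction of Lemma~\ref{vitali}, but restrict the admissible Vitali class at each $x\in B$ to sets $Q_{x,\rho}=B[x,\rho]\cap A$ whose centre $x$ is a point of continuity of $w$ (valid for a.e.\ $x\in B$) and whose radius $\rho$ is small enough that $Q_{x,\rho}\times Q_{x,\rho}\subset G$, that $\sup_{Q_{x,\rho}\times Q_{x,\rho}}|w-w(x,x)|<\ep$ (or $w>\ep^{-1}$ there, when $w(x,x)=\infty$), and that the sphere $\{y\in A:|y-x|=\rho\}$ has $\H_d$-measure $0$; each condition excludes, at a.e.\ $x\in B$, only countably many radii, so the conclusions of Lemma~\ref{vitali} remain valid. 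This yields a finite pairwise disjoint family $\{Q_\alpha=B[x_\alpha,\rho_\alpha]\cap A\}$ with bi-Lipschitz charts $\vf_\alpha\colon Q_\alpha\to\tilde Q_\alpha\subset\R^d$ of constant $1+\ep$, with $\L_d(\partial\tilde Q_\alpha)=0$ and $\H_d(A\setminus\bigcup_\alpha Q_\alpha)<\gamma$.

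The second step is the transfer estimate on one piece. Put $W_\alpha:=w(x_\alpha,x_\alpha)$ and suppose first $W_\alpha<\infty$. Given an $N$-point configuration $\tilde\omega_N=\{\tilde x_1,\dots,\tilde x_N\}\subset\tilde Q_\alpha$, pull it back to $\omega_N:=\vf_\alpha^{-1}(\tilde\omega_N)\subset Q_\alpha$. Since $\vf_\alpha$ is a bijection of bi-Lipschitz constant $1+\ep$, for every $y\in Q_\alpha$, writing $\tilde y:=\vf_\alpha(y)$, one has $|y-x_j|\leqslant(1+\ep)|\tilde y-\tilde x_j|$ and, by the oscillation bound, $w(y,x_j)\geqslant W_\alpha-\ep$, whence
$$
U^w_s(y;\omega_N)\geqslant\frac{W_\alpha-\ep}{(1+\ep)^s}\,U_s(\tilde y;\tilde\omega_N).
$$
Taking the infimum over $y\in Q_\alpha$ and then the supremum over $\tilde\omega_N$ gives $\PP^w_s(Q_\alpha;N)\geqslant\frac{W_\alpha-\ep}{(1+\ep)^s}\PP_s(\tilde Q_\alpha;N)$, and dividing by $\tau_{s,d}(N)$ and passing to the lower limit yields $\u{h}^w_{s,d}(Q_\alpha)\geqslant\frac{W_\alpha-\ep}{(1+\ep)^s}\u{h}_{s,d}(\tilde Q_\alpha)$. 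Now Lemma~\ref{lemmafromabove}, applied in $\R^d$ (legitimate since $\L_d(\partial\tilde Q_\alpha)=0$), gives $\u{h}_{s,d}(\tilde Q_\alpha)\geqslant\sigma_{s,d}\L_d(\tilde Q_\alpha)^{-s/d}$, while $\L_d(\tilde Q_\alpha)=\H_d(\tilde Q_\alpha)\leqslant(1+\ep)^d\H_d(Q_\alpha)$ and $\H_d(Q_\alpha)\leqslant(W_\alpha+\ep)^{d/s}\H^{s,w}_d(Q_\alpha)$, the latter because $w(x,x)\leqslant W_\alpha+\ep$ on $Q_\alpha$. Combining these bounds and using $W_\alpha\geqslant c_0$ to control $\left(\frac{W_\alpha+\ep}{W_\alpha-\ep}\right)^{d/s}\leqslant(1+4\ep/c_0)^{d/s}$, I obtain
$$
\u{h}^w_{s,d}(Q_\alpha)^{-d/s}\leqslant\frac{C(\ep)}{\sigma_{s,d}^{d/s}}\,\H^{s,w}_d(Q_\alpha),\qquad C(\ep):=(1+\ep)^{2d}(1+4\ep/c_0)^{d/s},
$$
and $C(\ep)\to1$ as $\ep\to0$. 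When $W_\alpha=\infty$, the same computation with $W_\alpha-\ep$ replaced by the arbitrarily large $\ep^{-1}$ shows $\u{h}^w_{s,d}(Q_\alpha)=\infty$, so the displayed inequality still holds.

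Finally I would assemble the pieces. Let $D:=A\setminus\bigcup_\alpha Q_\alpha$, so $A=\bigcup_\alpha Q_\alpha\cup D$; since the $Q_\alpha$ are finitely many and closed, $D$ is relatively open in $A$, and its relative boundary is contained in $\bigcup_\alpha\{y\in A:|y-x_\alpha|=\rho_\alpha\}$, a set of $\H_d$-measure $0$, so $\H_d(\clos D)=\H_d(D)<\gamma$. For the remainder term, \eqref{polarenergy} together with the energy monotonicity $\EE^w_s(D;N)\geqslant\EE^w_s(\clos D;N)$ gives $\PP^w_s(D;N)\geqslant\EE^w_s(\clos D;N)/(N-1)$; since $\clos D$ is a compact subset of the $(\H_d,d)$-rectifiable set $A$ with $\mathcal{M}_d(A)=\H_d(A)$, Theorem~\ref{thenergy} applies to $\clos D$ and yields $\u{h}^w_{s,d}(D)\geqslant c_{s,d}\H^{s,w}_d(\clos D)^{-s/d}$, where $c_{s,d}=C_{s,d}$ for $s>d$ and $c_{d,d}=\beta_d$; as $w^{-d/s}(x,x)\leqslant c_0^{-d/s}$ on $A$, we have $\H^{s,w}_d(\clos D)\leqslant c_0^{-d/s}\gamma$. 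Applying the finite-union form of Lemma~\ref{addit} and the disjointness of the $Q_\alpha$ inside $A$,
$$
\u{h}^w_{s,d}(A)^{-d/s}\leqslant\sum_\alpha\u{h}^w_{s,d}(Q_\alpha)^{-d/s}+\u{h}^w_{s,d}(D)^{-d/s}\leqslant\frac{C(\ep)}{\sigma_{s,d}^{d/s}}\H^{s,w}_d(A)+\frac{c_0^{-d/s}}{c_{s,d}^{d/s}}\,\gamma.
$$
Letting $\gamma\to0$ and then $\ep\to0$ gives $\u{h}^w_{s,d}(A)^{-d/s}\leqslant\sigma_{s,d}^{-d/s}\H^{s,w}_d(A)$, which is \eqref{frombeloww} (note $\H^{s,w}_d(A)\leqslant c_0^{-d/s}\H_d(A)<\infty$; if $\H^{s,w}_d(A)=0$ the inequality already forces $\u{h}^w_{s,d}(A)=\infty$). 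I expect the main obstacle to be the second step: one must arrange the bi-Lipschitz pull-back so that both the distortion of the Riesz kernel and the oscillation of $w$ are controlled, uniformly over all pieces $Q_\alpha$ simultaneously, by a single factor $C(\ep)$ tending to $1$ — this is precisely what forces the extra requirements placed on the Vitali class in the first step; the remainder of the argument is bookkeeping with sub-additivity and the known energy asymptotics.
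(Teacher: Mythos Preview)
Your proof is correct and follows essentially the same architecture as the paper's: Vitali-type cover of $A\setminus\clos(A_{\rm bi}^c)$, bi-Lipschitz transfer to $\R^d$, invocation of Lemma~\ref{lemmafromabove} on each piece, sub-additivity (Lemma~\ref{addit}), and the energy comparison \eqref{polarenergy}/Theorem~\ref{thenergy} for the remainder. The one notable variation is how the weight is frozen on each piece: the paper uses $\u w_{Q_\alpha}=\inf_{Q_\alpha\times Q_\alpha}w$ and then passes to $w(x,x)^{-d/s}$ via a Dominated Convergence argument along a sequence $\ep_k\to0$, whereas you impose an oscillation bound $|w-w(x_\alpha,x_\alpha)|<\ep$ on each Vitali set upfront (restricting centres to continuity points of $w$ and radii appropriately), which lets you reach $\H^{s,w}_d(Q_\alpha)$ directly and dispense with the DCT step. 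Your additional restriction that the spheres $\{y\in A:|y-x_\alpha|=\rho_\alpha\}$ be $\H_d$-null, together with lumping $\clos(A_{\rm bi}^c)$ into $D$, is a clean way to get $\H_d(\clos D)<\gamma$; the paper instead decomposes $A$ into three pieces and handles $\clos(A_{\rm bi}^c)$ separately.
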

\begin{proof}

Let $B:=A\setminus \clos(A_{\rm bi}^c)$ and note that  $B$ is a  subset of $A_{\rm bi}$ open relative to $A$. By assumption, $\clos(A_{\rm bi}^c)$ is a compact subset of $A$ of zero $\H_d$-measure. Then taking into account inequality \eqref{polarenergy} and Theorem \ref{thenergy} we obtain
\begin{equation}\label{reallysmallone}
\u{h}^w_{s,d}(\clos(A_{\rm bi}^c)) \geqslant \lim_{N\to \infty} \frac{\EE^w_{s}(\clos (A_{\rm bi}^c);N)}{(N-1)\tau_{s,d}(N)} = C_{s,d}[\H_d^{s,w}(\clos(A_{\rm bi}^c))]^{-s/d} = \infty.
\end{equation}
Let $\ep>0$ and let      $\mathcal{X}_{\ep, \ep}$ be a finite family of disjoint sets $\{Q_\alpha\}$ as in  Lemma \ref{vitali} with $\gamma=\epsilon$.  Define  
 $D:=B\setminus \cup_{\alpha} Q_\alpha$. Since $\clos(D)$ is a compact subset of $A$, inequality \eqref{polarenergy} and Theorem \ref{thenergy} imply
\begin{equation}\label{reallysmalltwo}
\u{h}^w_{s,d}(\clos(D))\geqslant \lim_{N\to \infty} \frac{\EE^w_{s}(\clos(D);N)}{(N-1)\tau_{s,d}(N)} = C_{s,d}[\H_d^{s,w}(\clos(D))]^{-s/d}.
\end{equation}

Next, we will estimate $\u{h}_{s,d}^w(Q_\alpha)$ for each $\alpha$. Recall 
$
\tilde{Q}_\alpha:=\vf_\alpha(Q_\alpha)
$ and that $\mathcal{L}_d(\partial \tilde{Q}_\alpha)=0$.
Let  $\tilde\omega_N $ denote an arbitrary $N$-point configuration in   $\tilde{Q}_\alpha$  and      $\omega_N:=\vf_\alpha^{-1}(\tilde\omega_N)\subset Q_\alpha$ denote the preimage of $\tilde \omega_N$. Set
$$
\u{w}_{Q_\alpha}:=\inf_{(x,y)\in Q_\alpha\times Q_\alpha}w(x,y).
$$
Since $w\geqslant \u{w}_{Q_\alpha}$ on $Q_\alpha\times Q_\alpha$ and the function $\vf_\alpha$ is bi-Lipschitz on $Q_\alpha$ with  constant $1+\ep$,  
$$\PP_s^w(Q_\alpha; N)\geqslant P^w_s(Q_{\alpha}; \omega_N)  \geqslant \u{w}_{Q_\alpha} P_s(Q_{\alpha}; \omega_N)\geqslant  (1+\ep)^{-s}\u{w}_{Q_\alpha} P_s(\tilde{Q}_\alpha; \tilde\omega_N),
$$
and thus,
\begin{equation}\label{PPsw1}
\PP_s^w(Q_\alpha;N) \geqslant (1+\ep)^{-s}\u{w}_{Q_\alpha}\PP_s(\tilde{Q}_\alpha; N).
\end{equation}
Dividing both sides of \eqref{PPsw1} by $\tau_{s,d}(N)$ and  then taking the  limit inferior as $N\to \infty$ gives
\begin{multline}\label{notreallysmall}
\u{h}_{s,d}^w(Q_\alpha)\geqslant (1+\ep)^{-s}\u{w}_{Q_\alpha} \u{h}_{s,d}(\tilde{Q}_\alpha)\geqslant (1+\ep)^{-s}\u{w}_{Q_\alpha} \sigma_{s,d} \L_d(\tilde{Q}_\alpha)^{-s/d}\\\geqslant  (1+\ep)^{-2s}\sigma_{s,d}\u{w}_{Q_\alpha} \H_d(Q_\alpha)^{-s/d},
\end{multline}
where  the second inequality follows from Lemma \ref{lemmafromabove}.

Finally, we apply Lemma \ref{addit} to $A=\clos(A_{\rm bi}^c)\cup \clos(D) \cup \left(\mathsmaller{\bigcup}_{\alpha} Q_{\alpha}\right)$. Combining \eqref{reallysmallone}, \eqref{reallysmalltwo} and \eqref{notreallysmall}, we obtain
\begin{multline}\label{yetanotherestimate}
[\u{h}_{s,d}^w(A)]^{-d/s}\leqslant [\u{h}_{s,d}^w(\clos(A_{\rm bi}^c))]^{-d/s}+[\u{h}_{s,d}^w(\clos(D))]^{-d/s}+\sli_{\alpha}[\u{h}_{s,d}^w(Q_\alpha)]^{-d/s} \\ \leqslant C_{s,d}^{-d/s}\H^{s,w}_d(\clos(D)) + (1+\ep)^{2d}\sigma_{s,d}^{-d/s}\sli_\alpha \u{w}_{Q_\alpha}^{-d/s} \H_d(Q_\alpha).
\end{multline}

Define  
$$
\u{w}_\ep(x):=\begin{cases} \u{w}_{Q_\alpha}^{-d/s}, & x\in Q_\alpha\ {\rm for \ some}\ \alpha,\\
0, &x\not\in \cup_\alpha Q_\alpha. \end{cases}
$$
Then \eqref{yetanotherestimate} implies
\begin{equation}\label{fromaboveforunderh}
[\u{h}_{s,d}^w(A)]^{-d/s} \leqslant  C_{s,d}^{-d/s}\H^{s,w}_d(\clos(D)) + (1+\ep)^{2d}\sigma_{s,d}^{-d/s}\int_A \u{w}_\ep(x){\rm d}\H_d(x).
\end{equation}
Observe that $\mathcal H_d(\partial_A Q_\alpha)=0$ for every $\alpha$ and that the set $A\setminus \left(\cup _{\alpha} {\rm int}_A Q_{\alpha}\right)$ is closed, where ${\rm int}_A Q_{\alpha}$ is the interior of $Q_\alpha$ relative to $A$. Recall also that $Q_\alpha\subset B$ for all $\alpha$ and that the sets $Q_\alpha$ are pairwise disjoint. Then
$$
D\subset {\rm clos}(D)\subset  D\cup \clos (A_{\rm bi}^c)\cup \left(\cup_{\alpha}\partial _A Q_{\alpha}\right).
$$
Consequently, $\mathcal H_d(\clos (D))=\mathcal H_d(D)<\gamma=\epsilon$. Then $\mathcal H_d^{s,w}(\clos (D))\to 0$ as $\epsilon\to 0$. Since ${\rm diam} (Q_{\alpha})\leqslant 2\epsilon(1+\epsilon)$ for all $\alpha$, for every $\epsilon>0$ sufficiently small, we have $Q_{\alpha}\times Q_{\alpha}\subset G$ for every $\alpha$, where the set $G$ is a neighborhood of $D(A)$ relative to $A\times A$ such that $a:=\inf _G w>0$, see Definition \ref {CPDweight}. This implies for sufficiently small $\ep>0$
\begin{equation}\label{boundeddd}
0\leqslant \u{w}_\ep(x)\leqslant a^{-d/s}.
\end{equation}
For every $k\in \NN$ denote $\ep_k:=2^{-k}$ and $\{Q_\alpha^k\}:=\mathcal{X}_k:=\mathcal{X}_{\ep_k, \ep_k}$. Let 
$$
M:=\left\{x\in B\colon \exists \ep_0 \; \mbox{such that} \; \forall \ep_k\leqslant\ep_0 \; \mbox{we have}\; x\in Q_\alpha^k \; \mbox{for some}\; Q_\alpha^k\in \mathcal{X}_k \right\}.
$$
We see that
$$
B\setminus M = \bigcap_{\ep_0} \bigcup_{k\geqslant \log_2(1/\ep_0)} (B\setminus \cup_\alpha Q_\alpha^k),
$$
thus for any $\ep_0>0$ we have
$$
\H_d(B\setminus M) \leqslant \sli_{k\geqslant  \log_2(1/\ep_0)} 2^{-k} \leqslant 2\ep_0,
$$
which implies $\H_d(B\setminus M)=0$. On the other hand, it is obvious that for every $x\in M$ we have $\lim_{k\to \infty} \u{w}_{\ep_k}(x) = w^{-d/s}(x,x)$. Using the estimate \eqref{fromaboveforunderh} for $\ep_k$ and in view of \eqref{boundeddd} and the Lebesgue Dominated Convergence Theorem, we obtain \eqref {frombeloww}. 
\end{proof}

\section{Limit distribution of asymptotically optimal configurations}\label {S8}

In this section we prove that   asymptotically $(s,w)$-optimal sequences of $N$-point configurations   are  distributed on the set $A$ according to $\mathcal{H}_d^{s,w}$.

Throughout this section,   $A$ will denote a set in $\R^p$ that satisfies the hypotheses of Theorem \ref{mainmain} (including    $\H_d(A)>0$)
and  $\{\omega_N\}_{N\geqslant 1}$ will denote an asymptotically $(s,w)$-optimal sequence of configurations  (see Definition~\ref{asympconf}) in $A$.

We start with the following lemma.
\begin{lemma}\label{manypoints} Let $\epsilon$ and $\gamma$ be positive numbers and  $\mathcal{X}_{\ep, \gamma}$ be as in Lemma \ref{vitali}.
Let $\tilde{Q}_\alpha=\vf_\alpha(Q_\alpha)$ for some fixed $Q_\alpha\in \mathcal{X}_{\ep, \gamma}$. 
Suppose  $\tilde\Gamma$    is a $d$-dimensional open cube contained in $\tilde{Q}_{\alpha}$ and let $\Gamma:=\vf_\alpha^{-1}(\tilde{\Gamma})$ and $N_\Gamma:=\#(\omega_N \cap \Gamma)$ for $N\in \NN$. Then $N_\Gamma \to \infty$ as $N\to \infty$.
\end{lemma}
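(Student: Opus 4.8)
\textbf{Plan for the proof of Lemma~\ref{manypoints}.} The idea is to argue by contradiction: if $N_\Gamma$ does not tend to infinity, then along a subsequence $\omega_N$ has a bounded number of points in $\Gamma$, and we can then ``replace'' the contribution of $\Gamma$ by an isolated small set while losing only $O(1)$ points, thereby contradicting the known asymptotic order $\tau_{s,d}(N)$ of $\PP^w_s(A;N)$.

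\textbf{Step 1 (set-up).} Suppose, for contradiction, that there is a subsequence $\mathcal N\subset \NN$ and a constant $M$ with $N_\Gamma=\#(\omega_N\cap\Gamma)\leqslant M$ for all $N\in\mathcal N$. Shrinking $\Gamma$ slightly if necessary, fix a smaller $d$-dimensional open cube $\Gamma'$ with $\clos(\Gamma')\subset \Gamma$ (strictly inside, so $\dist(\Gamma',A\setminus\Gamma)>0$ after pulling back through the bi-Lipschitz $\vf_\alpha$), and choose a point $y_0\in \Gamma'$ at positive distance $r_0>0$ from $\omega_N\setminus\Gamma$ and from $A\setminus\Gamma$.

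\textbf{Step 2 (the potential stays bounded at $y_0$).} At such a point $y_0$,
$$
U^w_s(y_0;\omega_N)=\sum_{x\in\omega_N\cap\Gamma}\frac{w(y_0,x)}{|y_0-x|^s}+\sum_{x\in\omega_N\setminus\Gamma}\frac{w(y_0,x)}{|y_0-x|^s}.
$$
By property (3) of the CPD-weight (Definition~\ref{CPDweight}) $w$ is bounded on the closed set $\{(y,x):|y-x|\geqslant r_0/2\}\cap(A\times A)$, so the second sum is at most $C\,N\,r_0^{-s}$ — wait, that is not good enough; I need instead to \emph{avoid} the points of $\omega_N$ entirely near $y_0$. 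The correct move is: since $\#(\omega_N\cap\Gamma)\leqslant M$ is \emph{finite}, the set $\Gamma'\setminus\bigcup_{x\in\omega_N\cap\Gamma}B(x,\rho)$ is nonempty for small $\rho$; and the $M$ excluded balls cover volume $O(M\rho^d)$, so for a fixed small $\rho$ there is a point $y_N\in\Gamma'$ with $\dist(y_N,\omega_N\cap\Gamma)\geqslant \rho$ and, as above, $\dist(y_N,\omega_N\setminus\Gamma)\geqslant r_0$. Hence
$$
U^w_s(y_N;\omega_N)\leqslant \|w\|_\infty\bigl(M\rho^{-s}+N r_0^{-s}\bigr)=O(N),
$$
where $\|w\|_\infty$ is the bound for $w$ on $\{|y-x|\geqslant \min(\rho,r_0)\}\cap(A\times A)$ from Definition~\ref{CPDweight}(3). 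Therefore $P^w_s(A;\omega_N)\leqslant U^w_s(y_N;\omega_N)=O(N)$.

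\textbf{Step 3 (contradiction with the order of growth).} Since $\{\omega_N\}$ is asymptotically $(s,w)$-optimal, $P^w_s(A;\omega_N)\sim \PP^w_s(A;N)$. But by \eqref{polarenergy} together with Theorem~\ref{thenergy} (and $\H_d(A)>0$, using $\H_d(A)=\mathcal M_d(A)$, so $\H^{s,w}_d(A)>0$ and finite), we have $\PP^w_s(A;N)\geqslant \EE^w_s(A;N)/(N-1)\sim \mathrm{const}\cdot\tau_{s,d}(N)$, and in fact for $s\geqslant d$ we have $\tau_{s,d}(N)/N\to\infty$. So $P^w_s(A;\omega_N)$ grows faster than $N$ along $\mathcal N$, contradicting $P^w_s(A;\omega_N)=O(N)$ of Step 2. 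Hence no such subsequence exists and $N_\Gamma\to\infty$.

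\textbf{Main obstacle.} The subtle point is Step 2: one must produce a test point $y_N$ inside $\Gamma$ that is simultaneously (a) far from the (finitely many, by assumption) points of $\omega_N$ lying in $\Gamma$ and (b) far from \emph{all} other points of $\omega_N$, so that the full potential $U^w_s(y_N;\omega_N)$ is only $O(N)$ rather than blowing up. This is exactly where the hypothesis $N_\Gamma\leqslant M$ is used — a finite number of points can be avoided within a fixed cube by a volume argument — and where one needs $\Gamma$ to be genuinely ``interior'' to $\tilde Q_\alpha$ (via the bi-Lipschitz chart $\vf_\alpha$) so that staying inside $\Gamma$ automatically keeps $y_N$ away from $\omega_N\setminus\Gamma$. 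Everything else is a routine comparison with the already-established energy asymptotics.
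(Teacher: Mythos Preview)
Your proof is correct and follows essentially the same route as the paper's: argue by contradiction, locate a point $y_N$ in (a compactly contained sub-region of) $\Gamma$ that is uniformly bounded away from all of $\omega_N$, deduce $P_s^w(A;\omega_N)=O(N)$, and contradict the growth order $\tau_{s,d}(N)$ of $\PP_s^w(A;N)$. The only cosmetic difference is that the paper cites Lemma~\ref{weightedfrombelow} for the positivity of $\u h_{s,d}^w(A)$, while you appeal directly to \eqref{polarenergy} and Theorem~\ref{thenergy}; both yield the same contradiction.
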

\begin{proof}
Suppose there is an unbounded set $\mathcal{N}$ of positive integer numbers such that $N_\Gamma$ are uniformly bounded from above when $N\in \mathcal{N}$. Since $\vf_\alpha$ is a bi-Lipschitz function, there is a positive number $a_0$ (that does not depend on $N$) and, for each $N\in \mathcal{N}$,  a point $z_N\in A$  such that $B(z_N, a_0)\cap A \subset \Gamma$ and $B(z_N, a_0)\cap \omega_N=\emptyset$. Therefore, $|z_N-x|\geqslant a_0$ for any $x\in \omega_N$. Recall that we denote $D(A)=\{(x,x)\colon x\in A\}$. Since the set $F:=\textup{clos}(\cup_{N\in \mathcal{N}}\{(z_N, x)\colon x\in \omega_N\})$ is a closed subset of $A\times A$ with $F\cap D(A)=\emptyset$, we conclude from Definition \ref{CPDweight} that the weight $w$ is bounded from above on $F$. Then for some constant $C$ and any large enough $N\in \mathcal{N}$,
$$
P_s^w(A; \omega_N) \leqslant U^w(z_N;  \omega_N)   \leqslant C\cdot N.
$$
Since $\{\omega_N\}_{N\geqslant 1}$ is asymptotically $(s,w)$-optimal, we have $\u{h}_{s,d}^w(A) = 0$, which contradicts the fact that $\u {h}_{s,d}^w(A)>0$ established in Lemma~\ref{weightedfrombelow}. Then $N_\Gamma\to \infty$ as $N\to\infty$.
\end{proof}
The next lemma makes the asymptotic behavior of $N_\Gamma$ more precise.
\begin{lemma}\label{distr}
 Let $\epsilon$, $\Gamma$  and $N_\Gamma$ be as above. Then
\begin{equation}\label{distrrrinf}
\liminf_{N\to \infty}\frac{\tau_{s,d}(N_\Gamma)}{\tau_{s,d}(N)} \geqslant  \frac{\H_d(\Gamma)^{s/d}\u{h}^w_{s,d}(A)}{\sigma_{s,d}(1+\ep)^{2s}\o{w}_\Gamma}
\end{equation}
and
\begin{equation}\label{limsupfrombelow}
\limsup_{N\to \infty}\frac{\tau_{s,d}(N_\Gamma)}{\tau_{s,d}(N)} \geqslant \frac{\H_d(\Gamma)^{s/d}\o{h}^w_{s,d}(A)}{\sigma_{s,d}(1+\ep)^{2s}\o{w}_\Gamma},
\end{equation}
where
$$\
\o{w}_\Gamma := \sup_{(y,x)\in \Gamma\times \Gamma}w(y,x).
$$
\end{lemma}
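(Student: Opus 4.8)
The plan is to establish, for each auxiliary parameter $\eta\in(0,1)$, an upper bound
\[
P_s^w(A;\omega_N)\ \le\ \o{w}_\Gamma\,(1+\ep)^s\,\PP_s\bigl(\tilde\Gamma^-;N_\Gamma\bigr)\ +\ o\bigl(\tau_{s,d}(N)\bigr)\qquad(N\to\infty),
\]
where $\tilde\Gamma^-\subset\tilde\Gamma$ is the closed cube concentric with $\tilde\Gamma$ whose side length is $(1-\eta)$ times that of $\tilde\Gamma$. Granting this, I would divide by $\tau_{s,d}(N)$, use the cube asymptotics $\PP_s(\tilde\Gamma^-;M)/\tau_{s,d}(M)\to\sigma_{s,d}/\L_d(\tilde\Gamma^-)^{s/d}$ as $M\to\infty$ (which follow from \eqref{cube222} for $s>d$ and \eqref{s=d} for $s=d$, by scaling) together with $N_\Gamma\to\infty$ from Lemma~\ref{manypoints}, and then pass to the $\liminf$; since $\{\omega_N\}$ is asymptotically $(s,w)$-optimal one has $\liminf_N P_s^w(A;\omega_N)/\tau_{s,d}(N)=\u{h}^w_{s,d}(A)$ (and $\limsup_N=\o{h}^w_{s,d}(A)$), and since $0\le\tau_{s,d}(N_\Gamma)/\tau_{s,d}(N)\le 1$ this yields $\u{h}^w_{s,d}(A)\le\o{w}_\Gamma(1+\ep)^s\sigma_{s,d}\L_d(\tilde\Gamma^-)^{-s/d}\liminf_N\tau_{s,d}(N_\Gamma)/\tau_{s,d}(N)$, and the analogue with $\limsup$. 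Letting $\eta\to0^+$ (so $\L_d(\tilde\Gamma^-)=(1-\eta)^d\L_d(\tilde\Gamma)\to\L_d(\tilde\Gamma)$) and applying the Lipschitz bound $\H_d(\Gamma)=\H_d(\vf_\alpha^{-1}(\tilde\Gamma))\le(1+\ep)^d\L_d(\tilde\Gamma)$, hence $\L_d(\tilde\Gamma)^{s/d}\ge(1+\ep)^{-s}\H_d(\Gamma)^{s/d}$, produces exactly \eqref{distrrrinf} and \eqref{limsupfrombelow}. (We may assume $\o{w}_\Gamma<\infty$, the inequalities being trivial otherwise.)

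To obtain the displayed bound I would set $\Gamma^-:=\vf_\alpha^{-1}(\tilde\Gamma^-)$, a compact subset of $\Gamma=\vf_\alpha^{-1}(\tilde\Gamma)$; since $\Gamma$ is relatively open in $A$ (it is the preimage of the open set $\tilde\Gamma$ under the bi-Lipschitz homeomorphism of a relatively open neighborhood of $Q_\alpha$ in $A$ furnished by the construction of $\vf_\alpha$ in Lemma~\ref{vitali}), the set $A\setminus\Gamma$ is closed, so $\delta_0:=\dist(\Gamma^-,A\setminus\Gamma)>0$. Then $P_s^w(A;\omega_N)\le\inf_{y\in\Gamma^-}U_s^w(y;\omega_N)$, and for $y\in\Gamma^-$ I split $U_s^w(y;\omega_N)=U_s^w(y;\omega_N\cap\Gamma)+U_s^w(y;\omega_N\setminus\Gamma)$. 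The second piece is harmless: every $x\in\omega_N\setminus\Gamma$ has $|y-x|\ge\delta_0$, and by property~(3) of a CPD-weight $W_0:=\sup\{\,w(u,v):(u,v)\in A\times A,\ |u-v|\ge\delta_0\,\}<\infty$, so this piece is $\le W_0\delta_0^{-s}N=o(\tau_{s,d}(N))$. For the first piece I use $w\le\o{w}_\Gamma$ on $\Gamma\times\Gamma$ and the bi-Lipschitz bound $|y-x|\ge(1+\ep)^{-1}|\vf_\alpha(y)-\vf_\alpha(x)|$, which moves the estimate to the genuine cube $\tilde\Gamma$: with $\tilde\omega:=\vf_\alpha(\omega_N\cap\Gamma)\subset\tilde\Gamma$ it reduces to bounding $\inf_{\tilde y\in\tilde\Gamma^-}\sum_{\tilde x\in\tilde\omega}|\tilde y-\tilde x|^{-s}$.

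The hard part is precisely this last infimum: because we were forced to shrink the evaluation region to $\tilde\Gamma^-$ in order to keep it at positive distance from $A\setminus\Gamma$, the configuration $\tilde\omega$ lives in the strictly larger cube $\tilde\Gamma$, and $\inf_{\tilde y\in\tilde\Gamma^-}\sum_{\tilde x\in\tilde\omega}|\tilde y-\tilde x|^{-s}$ is not directly a polarization constant of $\tilde\Gamma^-$ (it can exceed $\PP_s(\tilde\Gamma^-;N_\Gamma)$ if points of $\tilde\omega$ pile up in the collar $\tilde\Gamma\setminus\tilde\Gamma^-$). I would get around this by a projection trick: let $\tilde\omega'\subset\tilde\Gamma^-$ be the image of $\tilde\omega$ under the nearest-point projection $\pi$ of $\R^d$ onto the closed convex cube $\tilde\Gamma^-$. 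Since for a closed convex set $C$, any $y\in C$ and any $x$ one has $|y-\pi_C(x)|\le|y-x|$, the potential only increases under this projection on $\tilde\Gamma^-$, i.e. $\sum_{\tilde x\in\tilde\omega}|\tilde y-\tilde x|^{-s}\le\sum_{\tilde x'\in\tilde\omega'}|\tilde y-\tilde x'|^{-s}$ for every $\tilde y\in\tilde\Gamma^-$; taking the infimum gives $\inf_{\tilde y\in\tilde\Gamma^-}\sum_{\tilde x\in\tilde\omega}|\tilde y-\tilde x|^{-s}\le P_s(\tilde\Gamma^-;\tilde\omega')\le\PP_s(\tilde\Gamma^-;N_\Gamma)$, since $\tilde\omega'$ is an honest $N_\Gamma$-point configuration inside $\tilde\Gamma^-$. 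Combining the three estimates gives the displayed bound; the remaining steps — the relative openness of $\Gamma$ and the elementary $\liminf$/$\limsup$ bookkeeping as $\eta\to0^+$ — are routine.
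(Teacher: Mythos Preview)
Your proof is correct and is essentially the paper's own argument: shrink to an inner concentric closed cube (your $\tilde\Gamma^-$ is the paper's $\tilde\Gamma_\upsilon$), split off the $O(N)$ contribution from $\omega_N\setminus\Gamma$ using the CPD property, transport via $\vf_\alpha$, and use nearest-point projection onto the convex inner cube to bound by $\PP_s(\tilde\Gamma^-;N_\Gamma)$ before invoking the cube asymptotics and letting the shrinking parameter vanish. The only cosmetic difference is that you obtain $\dist(\Gamma^-,A\setminus\Gamma)>0$ by arguing that $\Gamma$ is relatively open in $A$, whereas the paper decomposes $A\setminus\Gamma=(A\setminus Q_\alpha)\cup(Q_\alpha\setminus\Gamma)$ and bounds the distance to each piece separately.
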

\begin{proof}
Let  the sidelength of $\tilde{\Gamma}$ be denoted by $r>0$. For $0<\upsilon<r$, let $\tilde{\Gamma}_\upsilon$   denote the closed $d$-dimensional cube with the same center as $\tilde{\Gamma}$ and sidelength $r-\upsilon$. Denote $\Gamma_\upsilon:=\vf_\alpha^{-1}(\tilde{\Gamma}_\upsilon)$.

For any $N\geqslant 1$, 
$$
P_s^w(A; \omega_N) \leqslant P_s^w( \Gamma_\upsilon ;  \omega_N) = \inf_{y\in \Gamma_\upsilon} \left( \sli_{x \in \omega_N\cap \Gamma}\frac{w(y,x)}{|y-x|^s} + \sli_{x \in \omega_N\setminus \Gamma}\frac{w(y,x)}{|y-x|^s}\right).
$$
If $y\in \Gamma_\upsilon$ and $x\in Q_\alpha\setminus \Gamma$ then $|\vf_\alpha(y)-\vf_\alpha(x)| \geqslant \upsilon/2$, thus $|y-x|\geqslant {(1+\ep)^{-1}\upsilon/2}$. Furthermore, $h:={\rm dist}(\Gamma_{\upsilon},A\setminus Q_\alpha)>0$ since $\Gamma_{\upsilon}$ is a compact subset of the interior of $Q_\alpha$. Then for any $y\in \Gamma_{\upsilon}$ and $x\in A\setminus \Gamma=(A\setminus Q_\alpha)\cup (Q_\alpha\setminus \Gamma)$, we have $\left|y-x\right|\geqslant \min\{h,(1+\ep)^{-1}\upsilon/2\}>0$. This means that the set $F_1:=\textup{clos}(\Gamma_{\upsilon}\times (A\setminus \Gamma))\subset A\times A$ does not intersect the diagonal $D(A)$. Thus, the weight $w$ is bounded above on $F_1$ by a constant (which can depend on $\upsilon$). Consequently,
\begin{equation}
\label{9.2a}
P_s^w(A; \omega_N) \leqslant P_s^w( \Gamma_\upsilon ;    \omega_N\cap \Gamma)  + C_{\upsilon, \ep}\cdot N \leqslant \o{w}_\Gamma \cdot P_s( \Gamma_\upsilon ;    \omega_N\cap \Gamma)+ C_{\upsilon, \ep}\cdot N,
\end{equation}
where $C_{\upsilon,\epsilon}$ is a constant independent on $N$ and $\omega_N$.
 Let $\tilde \omega_N^\Gamma:=\varphi_\alpha(\omega_N\cap \Gamma)\subset \tilde \Gamma$.  Since $\varphi_\alpha$ is bi-Lipschitz with constant $(1+\epsilon)$, we have using \eqref{9.2a} that
$$
P_s^w(A; \omega_N) \leqslant (1+\ep)^{s} \o{w}_\Gamma P_s( \tilde{\Gamma}_\upsilon; \tilde{\omega}_N^\Gamma) + C_{\gamma, \ep}\cdot N.
$$

For any $\tilde{x}\in \tilde{\omega}_N^\Gamma$, define $\tilde{x}'$ to be the point in $\tilde\Gamma_\upsilon$ closest to $\tilde{x}$ (in particular, $\tilde{x}'=\tilde{x}$ if $\tilde{x}\in\tilde{\Gamma}_\upsilon$). Denote $\tilde{\omega}_N':=\{\tilde{x}'\colon \tilde{x}\in \tilde{\omega}_N^\Gamma\}$. Notice that $\#\tilde{\omega}_N' = N_\Gamma$. Since $\tilde\Gamma_\upsilon$ is a convex set, for any $\tilde y\in \tilde\Gamma_{\upsilon}$ we have $|\tilde y-\tilde{x}|\geqslant |\tilde y-\tilde{x}'|$. Thus,
\begin{equation}\label{trampampam}
\begin{split}
P_s^w(A; \omega_N) &\leqslant (1+\ep)^{s} \o{w}_\Gamma P_s( \tilde{\Gamma}_\upsilon; \tilde{\omega}_N')+ C_{\upsilon, \ep}\cdot N  \\
&\leqslant 
 (1+\ep)^{s}\o{w}_\Gamma \PP_s(\tilde{\Gamma}_\upsilon; N_\Gamma) + C_{\upsilon, \ep}\cdot N \\ 
 &=(1+\ep)^{s}\o{w}_\Gamma \H_d(\tilde\Gamma_\upsilon)^{-s/d} \PP_s(\mathcal{Q}_d, N_\Gamma) + C_{\upsilon, \ep}\cdot N.
\end{split}
\end{equation}
We now divide by $\tau_{s,d}(N)$ and take the limit inferior as $N\to \infty$. Using Lemma \ref {manypoints} and \eqref{cube222}, we obtain
$$
\u{h}_{s,d}^w(A) \leqslant (1+\ep)^{s} \o{w}_\Gamma (r-\upsilon)^{-s} \sigma_{s,d} \cdot \liminf_{N\to \infty}\frac{\tau_{s,d}(N_\Gamma)}{\tau_{s,d}(N)}.
$$
Since the number $\upsilon$ can be arbitrarily small, the function $\vf_\alpha$ is bi-Lipschitz, and $\mathcal H_d(\tilde \Gamma)=r^d$, we further obtain
$$
\u{h}_{s,d}^w(A)\leqslant (1+\ep)^{2s}\o{w}_\Gamma \H_d(\Gamma)^{-s/d} \sigma_{s,d}\cdot \liminf_{N\to \infty}\frac{\tau_{s,d}(N_\Gamma)}{\tau_{s,d}(N)},
$$
which proves \eqref{distrrrinf}.
Similarly, passing to $\limsup_{N\to \infty}$ in \eqref{trampampam}, we obtain
$$
\o{h}_{s,d}^w(A)\leqslant (1+\ep)^{2s}\o{w}_\Gamma \H_d(\Gamma)^{-s/d} \sigma_{s,d}\cdot \limsup_{N\to \infty}\frac{\tau_{s,d}(N_\Gamma)}{\tau_{s,d}(N)},
$$
which proves \eqref{limsupfrombelow}.
\end{proof}
Finally, we state the main lemma of this section, which proves the limiting behavior~\eqref{maindistr}.
\begin{lemma}\label{lemmadistr}
Suppose $B\subset A$ is a set with $\H_d(\partial_A B)=0$. Suppose $\{\omega_N\}_{N\geqslant 1}$ is an asymptotically $(s,w)$-optimal sequence of configurations in $A$. Then
$$
\lim_{N\to \infty}\frac{\#(\omega_N \cap B)}{N}=\frac{\H^{s,w}_d(B)}{\H^{s,w}_d(A)}.
$$
Hence, 
$$\label{maindistr2}
\nu(\omega_N)\stackrel{*}{\longrightarrow} \frac{1}{\H^{s,w}_d(A)} \H^{s,w}_d  \quad  \text{ as } \quad N\to \infty.
$$
\end{lemma}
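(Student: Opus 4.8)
The plan is to deduce both the limit and the weak-star statement from the single one-sided bound
\begin{equation}\label{pl:liminf}
\liminf_{N\to\infty}\frac{\#(\omega_N\cap B)}{N}\ \geqslant\ \frac{[\u{h}^w_{s,d}(A)]^{d/s}}{\sigma_{s,d}^{d/s}}\,\H^{s,w}_d(B),\qquad\text{whenever }\H_d(\partial_A B)=0.
\end{equation}
Indeed, taking $B=A$ in \eqref{pl:liminf} (the left side is then $1$, and $\partial_A A=\emptyset$) gives $\u{h}^w_{s,d}(A)\leqslant\sigma_{s,d}/\H^{s,w}_d(A)^{s/d}$; combined with the reverse inequality of Lemma~\ref{weightedfrombelow} this forces $\u{h}^w_{s,d}(A)=\sigma_{s,d}/\H^{s,w}_d(A)^{s/d}$, so the constant in \eqref{pl:liminf} is exactly $1/\H^{s,w}_d(A)$. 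Applying \eqref{pl:liminf} also to $A\setminus B$ --- which has the same relative boundary as $B$, so $\H_d(\partial_A(A\setminus B))=0$, and for which $\H^{s,w}_d$ is complementary --- and using $\#(\omega_N\cap(A\setminus B))=N-\#(\omega_N\cap B)$, we get $\limsup_N\#(\omega_N\cap B)/N\leqslant\H^{s,w}_d(B)/\H^{s,w}_d(A)$; together with \eqref{pl:liminf} this is the asserted limit. The closing weak-star statement then follows from the portmanteau theorem: every relatively open $G\subseteq A$ is exhausted from inside by finite unions of closed balls $A\cap B[x,\rho]$ whose relative spheres are $\H_d$-null (true for all but countably many radii $\rho$, the spheres being pairwise disjoint), so the limit just proved yields $\liminf_N\nu(\omega_N)(G)\geqslant\H^{s,w}_d(G)/\H^{s,w}_d(A)$ for every such $G$, while $\nu(\omega_N)(A)=1$.

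To prove \eqref{pl:liminf}, fix $\ep>0$. Since $\H^{s,w}_d$ is absolutely continuous with respect to $\H_d$ restricted to $A$ and $\H_d(\partial_A B)=\H_d(\clos(A_{\rm bi}^c))=0$, we have $\H^{s,w}_d(B)=\H^{s,w}_d(U)$ for the relatively open set $U:={\rm int}_A B\setminus\clos(A_{\rm bi}^c)\subset A_{\rm bi}$, so it suffices to cover $U$ (if $U=\emptyset$ then $\H^{s,w}_d(B)=0$ and there is nothing to prove). By Lemma~\ref{vitali} applied to $U$ there is, for each $\gamma>0$, a finite pairwise disjoint family $\{Q_\alpha\}$ with $\H_d(U\setminus\bigcup_\alpha Q_\alpha)<\gamma$, each $Q_\alpha$ carrying a bi-Lipschitz chart $\vf_\alpha$ of constant $(1+\ep)$ onto $\tilde Q_\alpha=\vf_\alpha(Q_\alpha)$ with $\L_d(\partial\tilde Q_\alpha)=0$; tiling the interior of each $\tilde Q_\alpha$ by small open $d$-cubes and pulling them back by $\vf_\alpha^{-1}$ yields a finite pairwise disjoint family $\{\Gamma_j\}$ --- of exactly the form admitted in Lemmas~\ref{manypoints}--\ref{distr}, with $\Gamma_j\subset B$ --- such that $\H_d(B\setminus\bigcup_j\Gamma_j)$ and $\max_j\diam\Gamma_j$ are both as small as we wish. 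Write $N_{\Gamma_j}:=\#(\omega_N\cap\Gamma_j)$. By Lemma~\ref{manypoints}, $N_{\Gamma_j}\to\infty$, and when $s=d$ this together with \eqref{distrrrinf} forces $\log N_{\Gamma_j}/\log N\to1$; in either case $\liminf_N\tau_{s,d}(N_{\Gamma_j})/\tau_{s,d}(N)=\liminf_N(N_{\Gamma_j}/N)^{s/d}$, so inequality \eqref{distrrrinf} of Lemma~\ref{distr} gives
\begin{equation}\label{pl:cube}
\liminf_{N\to\infty}\frac{N_{\Gamma_j}}{N}\ \geqslant\ \frac{[\u{h}^w_{s,d}(A)]^{d/s}}{\sigma_{s,d}^{d/s}\,(1+\ep)^{2d}}\,\H_d(\Gamma_j)\,\o{w}_{\Gamma_j}^{-d/s}.
\end{equation}
Since $\#(\omega_N\cap B)\geqslant\sum_j N_{\Gamma_j}$ and the family is finite, $\liminf_N$ of the sum dominates the sum of the $\liminf_N$'s, so $\liminf_N\#(\omega_N\cap B)/N\geqslant[\u{h}^w_{s,d}(A)]^{d/s}\sigma_{s,d}^{-d/s}(1+\ep)^{-2d}\sum_j\H_d(\Gamma_j)\,\o{w}_{\Gamma_j}^{-d/s}$.

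It remains to push $\sum_j\H_d(\Gamma_j)\,\o{w}_{\Gamma_j}^{-d/s}$ up to (essentially) $\H^{s,w}_d(B)$. For $r>0$ set $G_r(x):=\sup\{w(y,z):y,z\in A\cap B(x,r)\}$. By Definition~\ref{CPDweight}, $w$ is continuous at $\H_d$-a.e.\ point of $D(A)$ and $G_r(x)\geqslant w(x,x)\geqslant\inf_G w>0$ for all $x$ and $r$; hence $G_r^{-d/s}$ is uniformly bounded, and $G_r(x)\downarrow w(x,x)$ as $r\downarrow0$ for $\H_d$-a.e.\ $x$, so by monotone convergence $\int_B G_r(x)^{-d/s}\,{\rm d}\H_d(x)\uparrow\H^{s,w}_d(B)$. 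If every $\Gamma_j$ has $\diam\Gamma_j<r$, then $\Gamma_j\subset A\cap B(x,r)$ for each $x\in\Gamma_j$, whence $\o{w}_{\Gamma_j}\leqslant G_r(x)$ on $\Gamma_j$ and $\H_d(\Gamma_j)\,\o{w}_{\Gamma_j}^{-d/s}\geqslant\int_{\Gamma_j}G_r(x)^{-d/s}\,{\rm d}\H_d(x)$; summing and using that the $\Gamma_j$ cover $B$ up to $\H_d$-measure $\gamma$ and that $G_r^{-d/s}$ is bounded yields $\sum_j\H_d(\Gamma_j)\,\o{w}_{\Gamma_j}^{-d/s}\geqslant\int_B G_r^{-d/s}\,{\rm d}\H_d-C\gamma$. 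Letting the mesh and $\gamma$ tend to $0$, then $r\to0^+$, then $\ep\to0^+$, we arrive at \eqref{pl:liminf}, which completes the proof.

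The hard part is this last step: converting the per-cube lower bounds of Lemma~\ref{distr} into the exact integral $\H^{s,w}_d(B)$. This forces one to interlock the Vitali-type covering of Lemma~\ref{vitali} (disjoint admissible cubes of arbitrarily small diameter covering $B$ up to arbitrarily small $\H_d$-measure), the $(1+\ep)^{2d}$-bounded distortion of the bi-Lipschitz charts, and --- most delicately --- the $\H_d$-a.e.\ continuity of $w$ on the diagonal, which is precisely what allows $\o{w}_{\Gamma_j}$, a supremum over a full (two-sided) neighborhood, to be squeezed down to the diagonal value $w(x,x)$ on the bulk of $B$ via the functions $G_r$ and monotone convergence. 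Everything else --- the reduction via complementation, the finite summation, and the elementary $\tau_{s,d}$-bookkeeping when $s=d$ --- is routine once Lemmas~\ref{vitali}, \ref{weightedfrombelow}, \ref{manypoints}, and \ref{distr} are available.
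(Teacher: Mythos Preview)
Your proof is correct and follows essentially the same route as the paper's: establish the one-sided bound \eqref{pl:liminf} via the Vitali-type covering of Lemma~\ref{vitali}, the per-cube estimate of Lemma~\ref{distr}, and a passage from the Riemann-type sum $\sum_j \H_d(\Gamma_j)\,\o w_{\Gamma_j}^{-d/s}$ to $\H^{s,w}_d(B)$; then combine with Lemma~\ref{weightedfrombelow} and complement to $A\setminus B$. The only notable technical variation is in that passage to the integral: the paper iteratively refines the cube families $\mathfrak G_n$ and applies dominated convergence to the step functions $u_n(x)=\sum_{\Gamma\in\mathfrak G_n}\o w_\Gamma^{-d/s}\chi_\Gamma(x)$, whereas you compare against the auxiliary functions $G_r(x)=\sup\{w(y,z):y,z\in A\cap B(x,r)\}$ and use monotone convergence in $r$. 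Both devices exploit exactly the same hypothesis ($\H_d$-a.e.\ continuity of $w$ on the diagonal), and yours is arguably cleaner, though you should note that measurability of $G_r$ is not entirely automatic for a general CPD weight; the paper's step functions sidestep this. Two further minor remarks: for $s=d$ you take a small detour proving $\log N_{\Gamma_j}/\log N\to1$, while the paper simply uses the elementary inequality $\tau_{s,d}(N_\Gamma)/\tau_{s,d}(N)\leqslant (N_\Gamma/N)^{s/d}$, which already suffices; and your observation that $B=A$ in \eqref{pl:liminf} immediately yields $\u h^w_{s,d}(A)\leqslant \sigma_{s,d}/\H^{s,w}_d(A)^{s/d}$ is a nice bonus that the paper defers to its Section~\ref{S9}.
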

\begin{proof}
If $\H_d(B)=0$ then clearly
$$
\liminf_{N\to \infty}\frac{\#(\omega_N \cap B)}{N}\geqslant\frac{\H^{s,w}_d(B)}{\H^{s,w}_d(A)}.
$$
Therefore, it remains to prove this inequality for $B$ with $\H_d(B)>0$.
Denote $B_{\rm bi}:={\textup{int}_A (B\setminus \clos(A_{\rm bi}^c))}$, where ${\rm int}_{A}\ \! X$ denotes the interior of a set $X\subset A$ relative to $A$.
For an $\ep>0$ consider the family $\mathcal{X}_{\ep, \ep}=\{Q_\alpha\}$ from Lemma \ref{vitali} constructed for the set $B_{\rm bi}$. Then $B_{\rm bi}=(\cup_\alpha Q_\alpha )\cup D$ with $\H_d(D)<\ep$. For each $\tilde{Q}_\alpha:=\vf_\alpha(Q_\alpha)$, consider a finite family $\tilde{\mathfrak{G}}_{\alpha}$ of disjoint open cubes $\tilde{\Gamma}\subset \tilde{Q}_\alpha$ (the families $\mathfrak G_\alpha$ will be specified later). Denote $\mathfrak{G}_\alpha:=\{\vf_\alpha^{-1}(\tilde{\Gamma})\colon \tilde\Gamma\in\tilde{\mathfrak{G}}_\alpha\}$ and let $\mathfrak{G}:=\bigcup_{\alpha}\mathfrak{G}_\alpha$.
Recall that for any $\Gamma \in \mathfrak{G}$ we define $N_\Gamma:=\#(\omega_N \cap \Gamma)$.

Notice that if $s\geqslant d$ then $\tau_{s,d}(N_\Gamma)/\tau_{s,d}(N)\leqslant(N_\Gamma/N)^{s/d}$ (in the case $s>d$ we have equality, while if $s=d$ we use $\log N_\Gamma \leqslant \log N$).  Then Lemma \ref{distr} implies that for every $\Gamma=\varphi_\alpha^{-1}(\tilde \Gamma)\in \mathfrak {G}$, we have
\begin{equation}\label{distrbigger}
\liminf_{N\to \infty} \frac{N_\Gamma}{N} \geqslant (1+\ep)^{-2d} \o{w}_\Gamma^{-d/s}\left(\frac{\u{h}_{s,d}^w(A)}{\sigma_{s,d}}\right)^{d/s} \cdot \H_d(\Gamma).
\end{equation}
 Since all sets $\Gamma\in \mathfrak{G}$ are disjoint, from \eqref {distrbigger} we have
\begin{equation}\label{distrrrrrr}
\begin{split}
\liminf_{N\to\infty} \frac{\#(\omega_N\cap B)}{N}&\geqslant \liminf\limits_{N\to\infty}\frac {\#(\omega_N\cap (\cup_\alpha Q_\alpha))}{N} \geqslant \liminf_{N\to \infty} \frac{1}N\sli_{\Gamma\in \mathfrak G} N_\Gamma  \geqslant \sli_{\Gamma\in \mathfrak G} \liminf_{N\to\infty} \frac{N_\Gamma}N\\ & \geqslant  (1+\ep)^{-2d} \left(\frac{\u{h}_{s,d}^w(A)}{\sigma_{s,d}}\right)^{d/s} \cdot \sli_{\Gamma\in \mathfrak G}\o{w}_\Gamma^{-d/s}\H_d(\Gamma).
\end{split}
\end{equation}

Fix a positive number $\upsilon$. Since $\L_d(\partial \tilde{Q}_\alpha)=0$ for every $\alpha$, we can choose the family $\tilde{\mathfrak{G}}_\alpha$ such that
\begin {equation}\label {43a}
\L_d\left(\tilde{Q}_\alpha\setminus \bigcup_{\tilde{\Gamma}\in \tilde{\mathfrak{G}}_\alpha}\tilde{\Gamma}\right)<\upsilon,
\end {equation}
and denote 
$$
\tilde{G}:=\bigcup_{\tilde{\Gamma}\in \tilde{\mathfrak{G}}}\tilde{\Gamma}, \;\;\;\; G:=\bigcup_{\Gamma\in \mathfrak{G}}\Gamma.
$$

Since the family $\{Q_\alpha\}$ is finite, for some constant $C_{\ep}$, which does not depend on $\upsilon$, 
\begin{equation}
\H_d(\cup_\alpha Q_\alpha \setminus G)\leqslant C_{\ep}\cdot \upsilon.
\end{equation}

Notice that $\tilde{G}$ is a finite union of open cubes. If we subdivide these cubes into smaller ones and call their union $\tilde{G}_1$, then $\L_d(\tilde{G}_1)=\L_d(\tilde G)$ and, moreover, the estimate \eqref{distrrrrrr} holds for the new collection $\tilde{\mathfrak{G}}_1$. We repeat this procedure and denote by $\tilde{\mathfrak{G}}_n$ the collection we get on the $n$'th step; we further denote by $\mathfrak G_n$ the collection of preimages of cubes from $\mathfrak{G}_n$. Then the maximum of the diameters of   cubes in $\tilde{\mathfrak{G}}_n$, and thus of every set in $\mathfrak{G}_n$ approaches 0 as $n\to \infty$; thus, as in the proof of Lemma~\ref{weightedfrombelow}, the Lebesgue Dominated Convergence Theorem applied to $u_n(x)=\sum_{\Gamma\in \mathfrak{G}_n}\o{w}_\Gamma^{-d/s}\chi_{\Gamma}(x)$ (where $\chi_{\Gamma}$ denotes the characteristic function of $\Gamma$) implies
$$
\sli_{\Gamma\in \mathfrak G_n}\o{w}_\Gamma^{-d/s}\H_d(\Gamma)\to \ili_{G}w^{-d/s}(x,x)\textup{d}\H_d(x), \;\; n\to\infty.
$$
Since $w^{-d/s}(x,x)$ is bounded away from zero, and $\H_d(\cup_\alpha Q_\alpha\setminus G)\leqslant C_\ep\cdot \upsilon$ for $\upsilon$ arbitrary small, we obtain from \eqref{distrrrrrr}
\begin{equation}
\liminf_{N\to\infty} \frac{\#(\omega_N\cap B)}{N}\geqslant (1+\ep)^{-2d} \left(\frac{\u{h}_{s,d}^w(A)}{\sigma_{s,d}}\right)^{d/s} \cdot \ili_{\cup_\alpha Q_\alpha}w^{-d/s}(x,x)\textup{d}\H_d(x).
\end{equation}

Finally, since $\H_d(B\setminus \cup_\alpha Q_\alpha)=\H_d(B_{\textup{bi}}\setminus \cup_\alpha Q_\alpha)<\ep$ and $\ep$ can be made arbitrarily small, we obtain using Lemma \ref{weightedfrombelow} that
\begin{equation}\label{distrliminf}
\liminf_{N\to\infty} \frac{\#(\omega_N\cap B)}{N}\geqslant \left(\frac{\u{h}_{s,d}^w(A)}{\sigma_{s,d}}\right)^{d/s} \cdot
\H^{s,w}_d(B) =\frac{\H^{s,w}_d(B)}{\H^{s,w}_d(A)} .
\end{equation}

Notice that a similar estimate is true for the set $A\setminus B$. Thus,
\begin{equation}\label{distrlimsup}
\limsup_{N\to \infty}  \frac{\#(\omega_N\cap B)}{N} = 1-\liminf_{N\to \infty} \frac{\#(\omega_N\cap (A\setminus B))}{N} \leqslant 1-
 \frac{\H^{s,w}_d(A\setminus B)}{\H^{s,w}_d(A)} = \frac{\H^{s,w}_d(B)}{\H^{s,w}_d(A)}.
\end{equation}
Combining estimates \eqref{distrliminf} and \eqref{distrlimsup}, we obtain
$$
\lim_{N\to \infty}  \frac{\#(\omega_N\cap B)}{N} = \frac{\H^{s,w}_d(B)}{\H^{s,w}_d(A)}.
$$
\end{proof}

\section{An estimate for $\o{h}^w_{s,d}$ from above}\label {S9}

In this section we prove that the lower bound for $\u{h}_{s,d}^w(A)$ from Lemma~\ref{weightedfrombelow} is also an upper bound for $\o{h}_{s,d}^w(A)$. In view of Lemmas \ref{weightedfrombelow} and \ref{lemmadistr}, this completes the proof of Theorem~\ref{mainmain}.
\begin{lemma}
Suppose $A\subset \R^p$ is a compact set with $\H_d(A)=\mathcal{M}_d(A)<\infty$ and that $\H_d(\clos(A_{\rm bi}^c))=0$. Suppose $w$ is a CPD-weight on $A\times A$ with parameter $d$. Then for any $s\geqslant d$, we have\begin{equation}\label{frombelow}
\o{h}_{s,d}^w(A)\leqslant \frac{\sigma_{s,d}}{\H^{s,w}_d(A)^{s/d}}.
\end{equation}
\end{lemma}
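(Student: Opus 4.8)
The plan is to exhibit, for an asymptotically $(s,w)$-optimal sequence $\{\omega_N\}$, a \emph{single} point of $A$ at which the weighted discrete potential is at most $(1+o(1))\,\sigma_{s,d}\,\tau_{s,d}(N)\,\H^{s,w}_d(A)^{-s/d}$. Because polarization is not monotone in the set, the points of $\omega_N$ lying far from the chosen point cannot simply be discarded; the device for controlling them is the same ``strictly interior subcube'' trick already used in the proof of Lemma~\ref{distr}, and the reason a single well-chosen patch suffices to recover the sharp constant is Lemma~\ref{lemmadistr}, which pins down the exact asymptotic number of points of $\omega_N$ falling into that patch.

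I begin with reductions. If $\H_d(A)=0$, or more generally if $\H^{s,w}_d(A)=0$, then the right-hand side of \eqref{frombelow} is $+\infty$ and there is nothing to prove (alternatively one may cite Corollary~\ref{measurezero}); so assume $\H_d(A)>0$ and $0<\H^{s,w}_d(A)<\infty$. For each $N$ choose $\omega_N$ with $P^w_s(A;\omega_N)\ge \PP^w_s(A;N)-1$; then $\{\omega_N\}$ is asymptotically $(s,w)$-optimal, $\o{h}^w_{s,d}(A)=\limsup_{N\to\infty}P^w_s(A;\omega_N)/\tau_{s,d}(N)$, and Lemma~\ref{lemmadistr} applies to $\{\omega_N\}$. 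Fix $\ep>0$. Since $\H_d(A_{\rm bi}^c)=0$, $w$ is continuous at $\H_d$-a.e.\ diagonal point, and $\H^{s,w}_d(A)>0$ forces $\{x\in A:w(x,x)<\infty\}$ to have positive $\H_d$-measure, I may pick $x^*\in A_{\rm bi}$ at which $w$ is continuous and $0<w(x^*,x^*)<\infty$. By Definition~\ref{bilip} there is a $(1+\ep)$-bi-Lipschitz map $\vf\colon B(x^*,\delta)\cap A\to\R^d$ with open image; inside that image fix a small closed $d$-cube $\tilde\Gamma$ centered at $\vf(x^*)$ and put $\Gamma:=\vf^{-1}(\tilde\Gamma)$, so that $x^*\in\Gamma$, $\H_d(\partial_A\Gamma)=0$, and (for $\tilde\Gamma$ small) $\H^{s,w}_d(\Gamma)>0$.

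The main estimate is derived exactly as in the proof of Lemma~\ref{distr}. Let $\tilde\Gamma_\upsilon$ be the concentric cube of sidelength reduced by $\upsilon$ and $\Gamma_\upsilon:=\vf^{-1}(\tilde\Gamma_\upsilon)$. For $y\in\Gamma_\upsilon$, split $U^w_s(y;\omega_N)$ into its part over $\omega_N\cap\Gamma$ and its part over $\omega_N\setminus\Gamma$: the latter is at most $C_{\Gamma,\upsilon,\ep}\,N$, since $\dist(\Gamma_\upsilon,A\setminus\Gamma)>0$ and property (3) of a CPD-weight bounds $w$ on closed sets off the diagonal; the former is bounded by replacing $w$ with $\o{w}_\Gamma:=\sup_{\Gamma\times\Gamma}w$, transporting through $\vf$ at a cost of $(1+\ep)^s$, and projecting the transported points onto the convex cube $\tilde\Gamma_\upsilon$. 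With $N_\Gamma:=\#(\omega_N\cap\Gamma)$, Lemma~\ref{basiclemma} then gives
\[
P^w_s(A;\omega_N)\ \le\ P^w_s(\Gamma_\upsilon;\omega_N)\ \le\ (1+\ep)^s\,\o{w}_\Gamma\,\PP_s(\tilde\Gamma_\upsilon;N_\Gamma)\ +\ C_{\Gamma,\upsilon,\ep}\,N .
\]
Dividing by $\tau_{s,d}(N)$ and letting $N\to\infty$: the last term disappears since $\tau_{s,d}(N)/N\to\infty$; Lemma~\ref{lemmadistr} gives $N_\Gamma/N\to p:=\H^{s,w}_d(\Gamma)/\H^{s,w}_d(A)\in(0,\infty)$, hence $\tau_{s,d}(N_\Gamma)/\tau_{s,d}(N)\to p^{s/d}$; and Euclidean scaling and translation invariance of polarization on cubes, combined with \eqref{cube222} when $s>d$ and with \eqref{s=d} when $s=d$ (recall $\sigma_{d,d}=\beta_d$), give $\PP_s(\tilde\Gamma_\upsilon;N_\Gamma)/\tau_{s,d}(N_\Gamma)\to\sigma_{s,d}\,\H_d(\tilde\Gamma_\upsilon)^{-s/d}$. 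Therefore $\o{h}^w_{s,d}(A)\le(1+\ep)^s\,\o{w}_\Gamma\,\sigma_{s,d}\,\H_d(\tilde\Gamma_\upsilon)^{-s/d}\,p^{s/d}$.

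To conclude, I first let $\upsilon\to0$ (so $\H_d(\tilde\Gamma_\upsilon)\to\H_d(\tilde\Gamma)$) and then shrink the cube $\tilde\Gamma$ toward $\vf(x^*)$ within the fixed chart. Continuity of $w$ at $(x^*,x^*)$ makes $\o{w}_\Gamma\to w(x^*,x^*)$ and $\H^{s,w}_d(\Gamma)/\H_d(\Gamma)\to w(x^*,x^*)^{-d/s}$, while the bi-Lipschitz inequality gives $\H_d(\Gamma)/\H_d(\tilde\Gamma)\le(1+\ep)^d$; since $w(x^*,x^*)\cdot w(x^*,x^*)^{-1}=1$, these collapse to $\o{h}^w_{s,d}(A)\le(1+\ep)^{2s}\,\sigma_{s,d}\,\H^{s,w}_d(A)^{-s/d}$, and letting $\ep\to0$ yields \eqref{frombelow}. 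I expect the far-field control to be the only genuine obstacle: it is the shift to the strictly interior subcube $\tilde\Gamma_\upsilon$ that makes the contribution of the points outside $\Gamma$ equal to $O(N)=o(\tau_{s,d}(N))$. (A variant that covers $A_{\rm bi}$ by many such patches via Lemma~\ref{vitali} and then minimizes over them would also work, but is not needed once Lemma~\ref{lemmadistr} is available.)
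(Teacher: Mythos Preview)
Your proof is correct and takes a genuinely different route from the paper. The paper covers $A_{\rm bi}$ by a fine Vitali family $\{Q_\alpha\}$ and, within each $Q_\alpha$, by many small cubes $\Gamma$; it then invokes \eqref{limsupfrombelow} for each $\Gamma$, uses Lemma~\ref{lemmadistr} only to convert each $\limsup N_\Gamma/N$ into a limit, sums over all $\Gamma$ using $\sum_\Gamma N_\Gamma\leqslant N$, and finally refines the covering and applies dominated convergence to recover $\H^{s,w}_d(A)$. You instead pick a \emph{single} generic point $x^*\in A_{\rm bi}$ at which $w$ is continuous with $w(x^*,x^*)\in(0,\infty)$, work with one small patch $\Gamma$, and exploit the full conclusion of Lemma~\ref{lemmadistr}, namely the exact value $N_\Gamma/N\to \H^{s,w}_d(\Gamma)/\H^{s,w}_d(A)$; shrinking $\Gamma$ to $x^*$ then collapses the weight factors since $\o{w}_\Gamma\cdot\big(\H^{s,w}_d(\Gamma)/\H_d(\Gamma)\big)^{s/d}\to 1$. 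Your argument is shorter because it needs no Vitali cover, no summation over patches, and no dominated-convergence step; the paper's approach, by contrast, uses Lemma~\ref{lemmadistr} only for the existence of each limit (not its value) and would still go through in settings where one merely knows the limits exist and sum to at most $1$. Both are valid, and your parenthetical remark that a many-patch variant ``would also work, but is not needed'' is exactly on point.
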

\begin{proof}
If $\H_d(A)=0$, then inequality \eqref {frombelow} holds trivially. Assume that $\H_d(A)>0$. Set $B:=A\setminus \clos(A_{\rm bi}^c)$. Then $B$ is a relatively open subset of $A_{\rm bi}$. For a positive number $\ep>0$, fix the family $\mathcal{X}_{\ep, \ep}$ from Lemma \ref{vitali}. Let $\{\omega_N\}_{N\geqslant 1}$ be an asymptotically optimal sequence of configurations for $\PP^w_s(A; N)$.
Let $\Gamma\subset B$ be a set as in Lemma \ref{manypoints}.
Recall the estimate in \eqref{limsupfrombelow}:
$$
\limsup_{N\to \infty}\left(\frac{N_\Gamma}{N}\right)^{s/d}\geqslant \limsup_{N\to\infty}\frac {\tau_{s,d}(N_\Gamma)}{\tau_{s,d}(N)}\geqslant (1+\ep)^{-2s}\o{w}_\Gamma^{-1}\frac{\o{h}^w_{s,d}(A)}{\sigma_{s,d}}\H_d(\Gamma)^{s/d}.
$$
Since $\H_d(\partial_A \Gamma) = 0$, Lemma \ref{lemmadistr} implies that the limit $\lim\limits_{N\to \infty}\frac{N_\Gamma}{N}$ exists. Then
$$
(1+\ep)^{-2d}\left(\frac{\o{h}^w_{s,d}(A)}{\sigma_{s,d}}\right)^{d/s}\o{w}_\Gamma^{-d/s}\H_d(\Gamma) \leqslant \lim_{N\to \infty}\frac{N_\Gamma}{N}.
$$
We now argue exactly as in Lemma \ref{lemmadistr}. That is, we take the sequence of families $\{\mathfrak{G}_n\}_{n=0}^\infty$ from the proof of Lemma \ref {lemmadistr} and obtain
$$
(1+\ep)^{-2d}\left(\frac{\o{h}^w_{s,d}(A)}{\sigma_{s,d}}\right)^{d/s}\sli_{\Gamma\in\mathfrak{G}_n}\o{w}_\Gamma^{-d/s}\H_d(\Gamma) \leqslant \sli_{\Gamma\in \mathfrak{G}_n}\lim_{N\to \infty}\frac{N_\Gamma}{N} = \lim_{N\to \infty}\frac{1}N\sli_{\Gamma\in \mathfrak{G}_n}N_\Gamma \leqslant 1.
$$

Passing to the limit as $n\to\infty$ we obtain
$$
(1+\ep)^{-2d}\left(\frac{\o{h}^w_{s,d}(A)}{\sigma_{s,d}}\right)^{d/s}\H_d^{s,w}({\rm clos}(B))\leqslant 1,
$$
which in view of $\H_d(\clos(A_{bi}^c))=0$ implies

$$
\left(\frac{\o{h}^w_{s,d}(A)}{\sigma_{s,d}}\right)^{d/s} \H^{s,w}_d(A) \leqslant 1,
$$
which completes the proof of \eqref{frombelow}.
\end{proof}
\bibliography{ref}

\begin{thebibliography}{10}

\bibitem{Ambrus2009}
G.~Ambrus.
\newblock {A}nalytic and {P}robabilistic {P}roblems in {D}iscrete {G}eometry.
\newblock {\em Thesis (Ph.D.)--University College London}, 2009.

\bibitem{Ambrus2013}
G.~Ambrus, K.~M. Ball, and T.~Erd{\'e}lyi.
\newblock {C}hebyshev constants for the unit circle.
\newblock {\em Bull. Lond. Math. Soc.}, 45(2):236--248, 2013.

\bibitem{Borodachov2014}
S.~V. Borodachov and N.~Bosuwan.
\newblock {A}symptotics of discrete {R}iesz {$d$}-polarization on subsets of
  {$d$}-dimensional manifolds.
\newblock {\em Potential Anal.}, 41(1):35--49, 2014.

\bibitem{Borodachov2007}
S.~V. Borodachov, D.~P. Hardin, and E.~B. Saff.
\newblock {A}symptotics of best-packing on rectifiable sets.
\newblock {\em Proc. Amer. Math. Soc.}, 135(8):2369--2380, 2007.

\bibitem{Borodachov2008}
S.~V. Borodachov, D.~P. Hardin, and E.~B. Saff.
\newblock {A}symptotics for discrete weighted minimal {R}iesz energy problems
  on rectifiable sets.
\newblock {\em Trans. Amer. Math. Soc.}, 360(3):1559--1580, 2008.

\bibitem{Borodachov2015}
S.~V. Borodachov, D.~P. Hardin, and E.~B. Saff.
\newblock {\em {M}inimal {D}iscrete {E}nergy on {R}ectifiable {S}ets}.
\newblock Springer, 2016.

\bibitem{MR1662447}
J.~H. Conway and N.~J.~A. Sloane.
\newblock {\em Sphere packings, lattices and groups}, volume 290 of {\em
  Grundlehren der Mathematischen Wissenschaften [Fundamental Principles of
  Mathematical Sciences]}.
\newblock Springer-Verlag, New York, third edition, 1999.
\newblock With additional contributions by E. Bannai, R. E. Borcherds, J.
  Leech, S. P. Norton, A. M. Odlyzko, R. A. Parker, L. Queen and B. B. Venkov.

\bibitem{Dahlberg1978}
Bj{\"o}rn Dahlberg.
\newblock {O}n the distribution of {F}ekete points.
\newblock {\em Duke Mathematical Journal}, 45(3):537--542, 1978.

\bibitem{Erdelyi2013}
T.~Erd{\'e}lyi and E.~B. Saff.
\newblock {R}iesz polarization inequalities in higher dimensions.
\newblock {\em J. Approx. Theory}, 171:128--147, 2013.

\bibitem{Farkas2008}
B.~Farkas and B.~Nagy.
\newblock {T}ransfinite diameter, {C}hebyshev constant and energy on locally
  compact spaces.
\newblock {\em Potential Anal.}, 28:241--260, 2008.

\bibitem{Farkas2006}
B.~Farkas and S.~G. R{\'e}v{\'e}sz.
\newblock {P}otential theoretic approach to rendezvous numbers.
\newblock {\em Monatsh. Math.}, 148(4):309--331, 2006.

\bibitem{Farkas2006a}
B.~Farkas and S.~G. R{\'e}v{\'e}sz.
\newblock {R}endezvous numbers of metric spaces---a potential theoretic
  approach.
\newblock {\em Arch. Math. (Basel)}, 86(3):268--281, 2006.

\bibitem{Hardin2013}
D.~P. Hardin, A.~P. Kendall, and E.~B. Saff.
\newblock {P}olarization optimality of equally spaced points on the circle for
  discrete potentials.
\newblock {\em Discrete Comput. Geom.}, 50(1):236--243, 2013.

\bibitem{Hardin2005}
D.~P. Hardin and E.~B. Saff.
\newblock {M}inimal {R}iesz energy point configurations for rectifiable
  {$d$}-dimensional manifolds.
\newblock {\em Adv. Math.}, 193(1):174--204, 2005.

\bibitem{Kuijlaars1998}
A.~B.~J. Kuijlaars and E.~B. Saff.
\newblock {A}symptotics for minimal discrete energy on the sphere.
\newblock {\em Trans. Amer. Math. Soc.}, 350(2):523--538, 1998.

\bibitem{Landkof1972}
Naum~Samo{\u\i}lovich Landkof.
\newblock {\em {F}oundations of modern potential theory}.
\newblock Springer-Verlag, New York-Heidelberg, 1972.

\bibitem{Mattila1995}
P.~Mattila.
\newblock {\em {G}eometry of sets and measures in {E}uclidean spaces. Fractals
  and rectifiability}, volume~44 of {\em Cambridge Studies in Advanced
  Mathematics}.
\newblock Cambridge University Press, Cambridge, 1995.

\bibitem{Ohtsuka1967}
M.~Ohtsuka.
\newblock {O}n various definitions of capacity and related notions.
\newblock {\em Nagoya Math. J.}, 30:121--127, 1967.

\bibitem{reznikov2016minimum}
A.~Reznikov, E.B. Saff, and O.V. Vlasiuk.
\newblock A minimum principle for potentials with application to chebyshev
  constants.
\newblock {\em arXiv preprint arXiv:1607.07283}, 2016.

\bibitem{MR1485778}
E.~B. Saff and V.~Totik.
\newblock {\em Logarithmic potentials with external fields}, volume 316 of {\em
  Grundlehren der Mathematischen Wissenschaften [Fundamental Principles of
  Mathematical Sciences]}.
\newblock Springer-Verlag, Berlin, 1997.
\newblock Appendix B by Thomas Bloom.

\bibitem{Simanek2015}
B.~Simanek.
\newblock {A}symptotically optimal configurations for chebyshev constants with
  an integrable kernel.
\newblock {\em New York Journal of Mathematics}, 22:667--675, 2016.

\bibitem{Su2014}
Y.~Su.
\newblock {\em Discrete {M}inimal {E}nergy on {F}lat {T}ori and {F}our-{P}oint
  {M}aximal {P}olarization on {$\mathbb{S}^2$}}.
\newblock ProQuest LLC, Ann Arbor, MI, 2015.
\newblock Thesis (Ph.D.)--Vanderbilt University.

\bibitem{thomson1904xxiv}
J.~J. Thomson.
\newblock On the structure of the atom: an investigation of the stability and
  periods of oscillation of a number of corpuscles arranged at equal intervals
  around the circumference of a circle; with application of the results to the
  theory of atomic structure.
\newblock {\em The London, Edinburgh, and Dublin Philosophical Magazine and
  Journal of Science}, 7(39):237--265, 1904.

\end{thebibliography}
\bibliographystyle{plain}

\end{document}